\journalname{}
\pgfplotsset{minor tick style={draw=none}}
\definecolor{labelkey}{rgb}{0,0.08,0.45}
\definecolor{refkey}{rgb}{0,0.6,0.0}
\definecolor{Brown}{rgb}{0.45,0.0,0.05}
\definecolor{dgreen}{rgb}{0.00,0.49,0.00}
\definecolor{dblue}{rgb}{0,0.08,0.75}
\definecolor{ffwwqq}{rgb}{1.,0.4,0.}
\definecolor{qqzzqq}{rgb}{0.,0.6,0.}
\definecolor{qqqqff}{rgb}{0.,0.,1.}
\definecolor{dred}{HTML}{D90404}
\definecolor{orng}{HTML}{D35400}
\definecolor{cb-black}      {RGB}{  0,   0,   0}
\definecolor{cb-blue-green} {RGB}{  0,  073,  073}
\definecolor{cb-green-sea}  {RGB}{  0, 146, 146}
\definecolor{cb-rose}       {RGB}{255, 109, 182}
\definecolor{cb-salmon-pink}{RGB}{255, 182, 119}
\definecolor{cb-purple}     {RGB}{ 73,   0, 146}
\definecolor{cb-blue}       {RGB}{ 0, 109, 219}
\definecolor{cb-lilac}      {RGB}{182, 109, 255}
\definecolor{cb-blue-sky}   {RGB}{109, 182, 255}
\definecolor{cb-blue-light} {RGB}{182, 219, 255}
\definecolor{cb-burgundy}   {RGB}{146,   0,   0}
\definecolor{cb-brown}      {RGB}{146,  73,   0}
\definecolor{cb-clay}       {RGB}{219, 209,   0}
\definecolor{cb-green-lime} {RGB}{ 36, 255,  36}
\definecolor{cb-yellow}     {RGB}{255, 255, 109}
\definecolor{bred}{HTML}{FF0000}
\definecolor{bpurp}{HTML}{BF00BF}
\definecolor{bblu}{HTML}{0000FF}
\definecolor{bcyan}{HTML}{00BFBF}
\definecolor{byellow}{HTML}{BFBF00}
\definecolor{bgreen}{HTML}{008000}
\newcommand{\bibfilename}{block-arxiv3}
\newtheorem{assumption}[theorem]{Assumption}
\newtheorem{fact}[theorem]{Fact}
\DeclareMathOperator*{\Argmin}{Argmin}
\newcommand{\scal}[2]{{\langle{{#1}\mid{#2}}\rangle}}
\DeclareMathOperator*{\minimize}{minimize}
\newcommand{\HH}{\mathcal H}
\newcommand{\HHH}{\boldsymbol{\mathcal H}}
\newcommand{\CCX}{\bigtimes_{i\in I}C_i}
\newcommand{\bz}{\boldsymbol{z}}
\newcommand{\bv}{\boldsymbol{v}}
\newcommand{\by}{\boldsymbol{y}}
\newcommand{\bx}{\boldsymbol{x}}
\newcommand{\bg}{\boldsymbol{g}}
\newcommand{\bc}{\boldsymbol{c}}
\newcommand{\NN}{\ensuremath{\mathbb N}}
\newcommand{\RR}{\ensuremath{\mathbb{R}}}
\newcommand{\menge}[2]{\big\{{#1}~\big |~{#2}\big\}}
\DeclareMathOperator{\lmo}{LMO}
\def\dateconvex{12-11T04-48}
\def\datenonconvex{12-11T04-48}
\def\dateadaptive{12-11T04-52}
\begin{document}
\title{Flexible block-iterative analysis for the Frank-Wolfe
algorithm}
\author{Gábor Braun \and Jannis Halbey \and Sebastian Pokutta \and Zev Woodstock}

\institute{Gábor Braun, Jannis Halbey, Sebastian Pokutta, Zev Woodstock
  \at
  Zuse Institute Berlin and
  Institute of Mathematics, Technische Universit\"at Berlin, Germany
  \and
  Zev Woodstock (\texttt{woodstzc@jmu.edu}), Corresponding author \at
  James Madison University, Harrisonburg, VA, USA;\\ The work of ZW
was supported by
the National Science Foundation under grant DMS-2532423.}

\maketitle
\begin{abstract}
We prove that the block-coordinate Frank-Wolfe (BCFW) algorithm
converges with state-of-the-art rates in both convex and nonconvex
settings under a very mild ``block-iterative'' assumption.
This appears to be the first
result on BCFW addressing the setting of nonconvex objective
functions with Lipschitz-continuous gradients and no additional
assumptions. This analysis newly allows for (I) progress without
activating the most-expensive
linear minimization oracle(s), LMO(s), at every iteration, (II)
parallelized updates that do not
require all LMOs, and therefore (III) deterministic parallel update
strategies that take into account the numerical cost of the
problem's LMOs. Our results
apply for short-step BCFW as well as an
adaptive method for convex functions. New relationships between
updated coordinates and primal progress are proven, and 
a favorable speedup is demonstrated using
\texttt{FrankWolfe.jl}.
\end{abstract}

\keywords{
  conditional gradient,
  block-iterative algorithm,
Frank-Wolfe,
projection-free first-order method,
parallel updates
}

\subclass{
  49M27, % decomposition methods
  49M37, % numerical methods based on nonlinear programming
  65K05, % Numerical mathematical programming methods
  90C26, % nonconvex programming
  90C30 % nonlinear programming
}

\section{Introduction}
\label{sec:1}

Given a smooth function \(f\) that maps from a finite Cartesian
product of $m$ real Hilbert spaces
$\HHH\coloneqq\bigoplus_{i=1}^{m}\HH_i$ to $\mathbb{R}$
and a product of nonempty compact
convex subsets $\bigtimes_{i=1}^mC_i\subset\HHH$
with \(C_{i} \subseteq \HH_{i}\),
we seek to solve
the following problem
\begin{equation}
\label{e:p}
\minimize_{\bx\in C_1\times\ldots\times C_m} f(\bx),
\end{equation}
which has applications in matrix factorization, support vector
machine training, sequence labeling, intersection verification, 
and more
\cite{Bomz24,Brau22,Ding08,Laco13,Osok16,Sreb04,Wang16,Wood24}.
Frank-Wolfe (F-W), also known as \emph{conditional gradient},
methods have become an
increasingly popular choice for solving \eqref{e:p} on large-scale
problems, because their method of enforcing set constraints, namely
the \emph{linear minimization oracle}, is oftentimes computationally
faster than other techniques such as projection algorithms
\cite{Comb21LMO}. A \emph{linear minimization oracle} $\lmo_C$
for a compact convex set \(C \subseteq \HHH\),
computes for any linear objective
$\bc\in\HHH$, a point in $\Argmin_{\bx\in C}\scal{\bc}{\bx}$. 
The oracle approach
is advantageous for problems such as the maximum matching problem 
\cite{rothvoss14,braun2014matching}, where efficient linear minimization is possible despite 
large linear program formulations; this also makes reduction between problems easier
\cite{Braun2015LP-SDP}.

Although \eqref{e:p} can be solved via the classical Frank-Wolfe
algorithm, it would necessitate that, at every iteration,
the linear minimization oracle for
$C_1\times\ldots\times C_m$ is evaluated.
This step can cause a computational bottleneck, since
\begin{equation}
\lmo_{C_1\times\ldots\times C_m}(\bx^1,\ldots,\bx^m)
=(\lmo_{C_1}\bx^1,\ldots,\lmo_{C_m}\bx^m),
\end{equation}
i.e., evaluating the Cartesian LMO amounts to computing $m$ separate
LMOs. To avoid this slowdown, there has been an increasing
effort over the last decade to reduce the per-iteration complexity
required by classical Frank-Wolfe algorithms while maintaining
theoretical guarantees of convergence
\cite{Beck15,Brau22,Bomz24,Laco13,Osok16,Wang16}. These
benefits make performing a single iteration on larger-scale
problems more tractable, and oftentimes allow for the more
efficient use of kilowatts in practice.

Here we are interested in improvements making use of the product
structure of the feasible region, which can later be combined with other
improvement techniques to better use linear minimization,
such as delaying updates via local acceleration
\cite{diakonikolas2020locally},
generalized self-concordant objective functions
\cite{carderera2021simple},
and \emph{boosting}, i.e., using multiple linear minimizations
to choose a direction for progress
\cite{combettes20boostfw}.

Perhaps the earliest work using the product structure of the feasible
region was
\cite{Patr98}, which proved that, for Armijo and exact line
searches, asymptotic convergence to a solution of \eqref{e:p} could
be achieved by, at each iteration, only updating one component
(also called coordinate) of
the iterate and thereby requiring one LMO evaluation. In
particular, \cite{Patr98} showed that convergence is guaranteed as
long as an \emph{essentially cyclic} selection scheme is used, that
is, as long as there exists some $K$ such that all $m$
components are updated at least once
over each consecutive $K$
iterations. In other words, the index $\mathfrak{i}(t)$ of
the component updated at iteration $t\in\NN$, satisfies
\begin{equation}
\label{e:ecyclic}
\{\mathfrak{i}(t),\ldots,\mathfrak{i}(t+K-1)\}=\{1,\ldots,m\}.
\end{equation}
About 17 years later, \cite{Beck15} significantly
improved upon these results for the \emph{cyclic} setting ($K=m$),
by (A)
widening to a scope of many more Frank-Wolfe variants (e.g., adaptive
steps, open-loop predefined steps, and backtracking) and (B)
deriving modern convergence rates. This cyclic scheme has shown to
be particularly useful with
randomly shuffling the order of updating the components
for each cycle.
In contrast to the deterministic methods, 
\cite{Laco13} showed that by selecting uniformly at random the
component to update in each iteration,
one can also solve \eqref{e:p}. Since then, two methods
have been proposed that select one component to update based on a
suboptimality criterion: the one in \cite{Osok16} is stochastic and
selects the component via a non-uniform distribution, while the
Gauss-Southwell, or ``greedy'', update scheme of \cite{Bomz24} is
deterministic. Such techniques can provide improved per-iteration
progress, although they are agnostic to the numerical costs of the
selected LMO.

In contrast to singleton-update schemes, the vanilla Frank-Wolfe
algorithm and several of its modern variants \cite{Wang16,Bomz24}
are particularly suitable for updating several components of an
iterate in parallel, which can yield better per-iteration progress.
In these \emph{block-iterative} settings, at iteration $t\in\NN$,
a \emph{block}
$I_t\subset\{1,\ldots,m\}$ of components are updated (possibly in
parallel) while leaving
the remaining components in $\{1,\ldots,m\}\setminus I_t$
unchanged. Updated components $i\in I_t$ are modified via a
Frank-Wolfe subroutine which relies on evaluating $\lmo_{C_i}$, and
therefore the selection of $I_t$ has a
great influence on the per-iteration cost of the algorithm.
Some of the earliest results for parallel block updated
Frank-Wolfe algorithms again arise from \cite{Patr98}, which proved
convergence (without rates) for parallel synchronous updates with
the full updating scheme $I_t=\{1,\ldots,m\}$
and a uniform step size across all components\footnote{Although \cite[Section~4]{Patr98} also contains results for
more general selection schemes of $I_t$, they do not apply to the
Frank-Wolfe setting (see \cite[Table~1]{Patr98}).}.
As pointed out by \cite{Beck15}, using a single step size in all
components can impede progress, since the relative
scale between componentwise constraints can be significant.
The recent work \cite{Bomz24} allowed for full updates with
variable componentwise step sizes, also significantly improving
convergence rates in certain settings.

However, outside of full-updates, it appears that only
\cite{Wang16} allows for block-sizes larger than $1$.
In particular, for a fixed block-size $p$, the results in 
\cite{Wang16} permit selecting the updated coordinates
uniformly at random. This application
is ideal when all LMOs are expected to require the same amount of
time, and $p$ processor cores are available. However, unless all
the operators $(\lmo_{C_i})_{i\in\{1,\ldots,m\}}$ require
similar levels of computational effort, there appear to be no other
good options for leveraging parallelism. In particular,
regardless of the step sizes considered, it appears that (prior to
this work) no block selection technique for a F-W algorithm allows
block sizes to change between iterations, and
there are no \emph{deterministic} rules which even allow for blocks
$I_t$ with sizes between $1$ and $m$. This poses a
significant drawback from a computational perspective, because the
current ``state-of-the-art'' leaves very little customizability or
adaptability in how the block-updates are selected. In particular,
a central goal of this work is to allow for the design of
cost-aware update techniques which take into account the relative
numerical cost of the LMOs of a given problem, and utilize all
available processors at a given iteration.
Newly-allowed update methods for BCFW will also be compared to
existing update techniques, including the 
essentially-cyclic singleton-update scheme \eqref{e:ecyclic} that
also allows for the design of cost-aware update strategies without
parallel block updates.

Even though the Frank-Wolfe algorithm predates many methods
which rely on proximity operators, advances in block-coordinate
proximal algorithms seem to have outpaced those in the Frank-Wolfe
literature. So, in this article we consider parallel and partial
componentwise updates for BCFW under the following
assumption, which comes from the proximal-based literature
\cite{Otta88}.

\begin{assumption}
\label{a:1}
There exists a positive integer $K$ such that, for every
iteration $t$,
\begin{equation}
\label{e:blockA}
(\forall i\in\{1,\ldots,m\}) \quad i\in\bigcup_{k=t}^{t+K-1}I_k.
\end{equation}
\end{assumption}
\noindent We emphasize the flexibility of Assumption~\ref{a:1}: in addition
to allowing for the computation of expensive LMOs at any (bounded)
rate, Assumption~\ref{a:1} allows deterministic parallelized
block-updates of variable sizes, up to the user.
Assumption~\ref{a:1} also unifies several existing 
selection schemes. Below are some example use-cases. 
\begin{enumerate}
\item
\label{flex1}
With the $I_{t}$ singletons, this becomes the \emph{
essentially cyclic} selection scheme \eqref{e:ecyclic} of
\cite{Tsen01,Patr98}; if additionally $K=m$, Assumption~\ref{a:1}
becomes the \emph{cyclic} scheme of \cite{Beck15}.
\item
With $I_{t}=\{1,\ldots,m\}$, this becomes the
\emph{full} selection method, also called \emph{parallel}
\cite{Patr98,Bomz24}.
\item
If $p$ processor cores are available, one can queue $p$ many LMO
operations to be performed in parallel, hence
satisfying Assumption~\ref{a:1} with $K=\lceil m/p \rceil$. 
This strategy is well-suited for reducing processor wait
times if the LMOs for the selected blocks require roughly the same amount of computational time (which
occurs, e.g., in \cite{Beck15,Osok16}).
\item
\label{flex4}
If the operators $(\lmo_{C_i})_{i\in \{1,\ldots,m\}}$ require
drastically different
levels of computational
time (e.g., where some LMOs are fast, while others require
comparatively slower computations such as eigendecomposing
a large matrix or solving a large linear program), one can postpone
evaluating the most expensive LMOs, provided they are evaluated
once every $K$ iterations. The experiments in Section~\ref{sec:num}
demonstrate that, by repeatedly iterating on the ``cheaper''
components, one can nonetheless provide good per-iteration progress
on the overall problem.\footnote{This strategy is reminiscent of
the Shamanskii-type Newton/Chord algorithms that only perform
numerically expensive Hessian updates once over a finite sequence of
iterations \cite{Kell95,Sham67}.}
\item
\label{flexn}
 Assumption~\ref{a:1} also allows for a quasi-stochastic strategy:
For all iterations from $t$ to $t+K-2$, use any stochastic
selection technique; then, at iteration $t+K-1$, additionally
activate the (potentially empty) set of components which were not
selected by the stochastic method.
\end{enumerate}

\subsection*{Contributions}
Our main contributions are threefold. To the best of our knowledge,
this article contains the first result concerning converge of BCFW
in the nonconvex case where the objective function has a
Lipschitz-continuous gradient and no extra assumptions.
We are only aware of one work which addresses BCFW with nonconvex objectives,
namely \cite{Bomz24} establishes
linear convergence under several assumptions including a
Kurdyka-Łojasiewicz-type inequality.
As is
standard in Frank-Wolfe
methods, Theorem~\ref{t:ncv-conv} proves that after $t$ iterations,
the algorithm is guaranteed to produce a point with
\emph{Frank-Wolfe gap} (a quantity closely related to
stationarity \cite{CGSurvey}) being at most
$\mathcal{O}(1/\sqrt{t})$. Second, for the case
of convex objective functions,
an $\mathcal{O}(1/t)$ primal gap convergence rate is proven for an
adaptive step size version of BCFW which does not require a-priori
smoothness estimation (Theorem~\ref{t:flex}); in consequence,
Corollary~\ref{c:sc} establishes convergence for
short-step BCFW with a rate and constant which matches short-step
FW \cite[Theorem~2.2]{CGSurvey}.
Third, throughout the entire article we only assume the flexible
block-activation scheme, Assumption~\ref{a:1},
which unifies many previous activation schemes for BCFW into one
simple framework and allows for
new block-selection strategies,
e.g., those available for some prox-based algorithms.
On toy problems for which there is a significantly disparate cost
of linear minimization oracles, these new selection strategies are
shown to perform comparably, or even \emph{better} than existing
methods in iterations, gradient evaluations, LMO
evaluations, and time. 

The remainder of the article is organized as follows.
Section~\ref{sec:bg} details background and preliminary results;
Section~\ref{sec:steps} presents a general formulation of BCFW, a
discussion on step size variants, and the common progress
estimation for convex objective functions.
Section~\ref{sec:adaptive-step-size} considers convex objective
functions and proves convergence under both adaptive step sizes
and short-step sizes.
Section~\ref{sec:ss} proves a convergence guarantee for nonconvex
objective functions with Lipschitz-continuous gradients.
Finally, Section~\ref{sec:num} shows computational experiments.

\subsection{Notation, standing assumptions, and auxiliary results}
\label{sec:bg}

Let \(I \coloneqq \{1, 2, \dots, m\}\),
and we consider the direct sum $\HHH\coloneqq\bigoplus_{i\in I}\HH_i$
of real Hilbert spaces $\HH_i$.
We denote points of \(\HHH\) by bold letters,
and components in the direct sum by upper indices,
i.e., $\bx=(\bx^1,\bx^2,\ldots,\bx^m)\in\HHH$
with \(\bx^{i} \in \HH_{i}\).
The inner product on $\HHH$ is
$\scal{\bx}{\by}_{\HHH}=\sum_{i\in I}\scal{\bx^i}{\by^i}_{\HH_i}$,
yielding the norm identity $\|\bx-\by\|_{\HHH}^2=\sum_{i\in
I}\|\bx^i-\by^i\|_{\HH_i}^2$.
For notational convenience, we treat the \(\HH_{i}\) as orthogonal subspaces of \(\HHH\),
in particular, \(\bx = \sum_{i \in I} \bx^{i}\).
We will omit the subscripts \(\HHH\), \(\HH_{i}\)
from norms and scalar products; this will not cause ambiguity as
all are restrictions of the ones on \(\HHH\).
For \(J \subseteq I\), let \(\bx^{J} \coloneqq \sum_{i \in J}
\bx^{j}\) be the part of \(\bx\) in the components \(\HH_{i}\)
for \(i \in J\).
For $i\in I$, let $C_i$ be a nonempty compact convex subset
of \(\HH_{i}\).
For $J\subset I$, let \(\bigtimes_{i \in J} C_{i}\) be the set of
points \(\bx \in \HHH\) with \(\bx^{i} \in C_{i}\) for all \(i\in J\)
and \(\bx^{i} = 0\) for \(i \notin J\).
Let \(D_{J}\) be the diameter of
\(\bigtimes_{i \in J} C_{i}\) (treated as a subset of
$\bigoplus_{i\in J}\HH_i \subset \HHH$).
We shall use the simplified notation
$D_i\coloneqq D_{\{i\}}$ and $D\coloneqq D_{I}$.

Let $f$ be a Fréchet differentiable function mapping from
\(\bigtimes_{i \in J} C_{i}\) to
$\mathbb{R}$.
We denote partial gradients by
$\nabla^Jf(\bx)\coloneqq (\nabla f(\bx))^{J}$.
For $L_f>0$, a function $f$ is \emph{$L_f$-smooth} on
a convex set $C$ if
\begin{equation}
\label{e:2}
(\forall \bx,\by \in C)\quad f(\by)-f(\bx)\leq\scal{\nabla f(\bx)}{\by-\bx}+
\dfrac{L_f}{2}\|\by-\bx\|^2;
\end{equation}
which holds, e.g., if $\nabla f$ is $L_f$-Lipschitz continuous
\cite{CGSurvey}.
Recall that $f$ is \emph{convex} on a convex set \(C\) if
\begin{equation}
\label{e:1}
\left(\forall \bx,\by \in C\right) \quad
\scal{\nabla f(\bx)}{\by-\bx}\leq
f(\by)-f(\bx).
\end{equation}
For nonempty \(J \subset I\) and
$\bx^J\in\bigoplus_{i\in
J}\HH_i$, the \emph{linear minimization oracle} $\lmo_{J}(\bx^J)$
returns a point in \(\Argmin_{\bv\in \bigtimes_{i \in J}
C_i}\scal{\bx^{J}}{\bv}\); also set $\lmo_i\coloneqq \lmo_{\{i\}}$.
A \emph{partial Frank-Wolfe gap} is given by
\begin{equation}
\label{e:Jgap}
(\forall J\subset I)\bigg(\forall \bx\in\CCX\bigg)\quad
  G_J(\bx) =
  \scal{\nabla^J f(\bx)}{\bx^{J}
    - \lmo_{J}(\nabla^J f(\bx))}
  ,
\end{equation}
with $G_i=G_{\{i\}}$. The \emph{Frank-Wolfe gap} (F-W
gap) of $f$ over $\CCX$ at
$\bx\in\HHH$ is given by
\begin{equation}
\label{e:fwgap}
G_{I}(\bx)\coloneqq \sup_{\bv\in \CCX}\scal{\nabla
f(\bx)}{\bx-\bv}=\sum_{i\in I} G_i(\bx).
\end{equation}
Note that, for every $\bx\in\CCX$ and every $J\subset I$, we have
$G_J(\bx)\geq 0$. The F-W gap vanishes at a solution of \eqref{e:p}
in the
following sense \cite{CGSurvey}
\begin{equation}
\label{e:opt}
\bx\text{ is a stationary point of
} \minimize_{\bx\in \CCX} f(\bx)
\quad\Leftrightarrow\quad
\begin{cases}
\bx\in\CCX\\
G_{I}(\bx)\leq 0.
\end{cases}
\end{equation}
Before proceeding further, we
gather several useful results.

\begin{lemma}
\label{lem:pcb}
Let $(C_i)_{i\in I}$ be nonempty compact convex subsets of real
Hilbert spaces $(\HH_i)_{i\in I}$, let $f\colon\CCX\to\mathbb{R}$
be convex, let $J\subset I$ be nonempty, and let $G_J$ be
given by \eqref{e:Jgap}. Then,
\begin{equation}
\label{e:26}
\Big(\forall \bz\in\CCX\Big)\quad 
G_{J}(\bz)\geq f(\bz) - \min_{\substack{\bx\in\bigtimes_{i\in
I}C_i\\ \bx^{I\setminus J}=\bz^{I\setminus J}}} f(\bx)\geq 0.
\end{equation}
\end{lemma}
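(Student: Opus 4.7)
The statement has two inequalities. The right inequality is essentially trivial: since $\bz\in\CCX$ itself satisfies the constraint $\bx^{I\setminus J}=\bz^{I\setminus J}$, the minimum over this feasible set is bounded above by $f(\bz)$, hence the difference is nonnegative.

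For the left inequality, I plan to fix an arbitrary $\bx\in\CCX$ with $\bx^{I\setminus J}=\bz^{I\setminus J}$, apply convexity of $f$ in the form \eqref{e:1}, and then upper bound the resulting inner product by the F-W gap quantity $G_J(\bz)$. Specifically, from \eqref{e:1},
\begin{equation*}
f(\bz)-f(\bx)\leq \scal{\nabla f(\bz)}{\bz-\bx}=\scal{\nabla f(\bz)}{\bz^{J}-\bx^{J}},
\end{equation*}
where the equality uses that $\bz^{I\setminus J}-\bx^{I\setminus J}=0$ by assumption. Because $\bz^{J}-\bx^{J}$ lies in $\bigoplus_{i\in J}\HH_i$, the inner product with $\nabla f(\bz)$ equals its inner product with the partial gradient $\nabla^{J} f(\bz)$ (components outside $J$ contribute zero).

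Next, the key step is to recognize that $\bx^{J}\in\bigtimes_{i\in J}C_i$, so by definition of the linear minimization oracle,
\begin{equation*}
\scal{\nabla^{J} f(\bz)}{\bx^{J}}\geq \scal{\nabla^{J} f(\bz)}{\lmo_{J}(\nabla^{J} f(\bz))}.
\end{equation*}
Subtracting from $\scal{\nabla^{J} f(\bz)}{\bz^{J}}$ and combining with the previous display yields
\begin{equation*}
f(\bz)-f(\bx)\leq \scal{\nabla^{J} f(\bz)}{\bz^{J}-\lmo_{J}(\nabla^{J} f(\bz))}=G_{J}(\bz).
\end{equation*}
Since this holds for every feasible $\bx$, taking the infimum over such $\bx$ on the left gives the claim.

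I do not expect any substantive obstacle: the proof is a straightforward combination of convexity and the LMO's defining property, with the only mild subtlety being the bookkeeping that shows the full gradient $\nabla f(\bz)$ can be replaced by its partial version $\nabla^{J} f(\bz)$ once the difference $\bz-\bx$ has been restricted to the $J$-components via the constraint $\bx^{I\setminus J}=\bz^{I\setminus J}$.
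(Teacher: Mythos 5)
Your proof is correct and follows essentially the same route as the paper: both arguments combine the convexity inequality \eqref{e:1} with the defining optimality of $\lmo_J$, the only difference being that the paper starts from $G_J(\bz)$ and chains down to the minimizer $\bx_{\bz}^*$, while you start from convexity at an arbitrary feasible $\bx$ and then take the minimum. No gaps.
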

\begin{proof}
Let $\bx_{\bz}^*\in\Argmin_{\bx^J\in\bigtimes_{i\in
J}C_i} f(\bx^J + \bz^{I\setminus J})$.
By \eqref{e:Jgap} and optimality of the LMO,
$G_J(\bz)=\scal{\nabla^J f(\bz)}{\bz^J-\lmo_J(\nabla^J
f(\bz))}
\geq\scal{\nabla^J f(\bz)}{\bz^J-\bx_{\bz}^*}
=\scal{\nabla f(\bz)}{\bz-(\bx_{\bz}^*+\bz^{I\setminus
J})}$,
so using convexity yields \eqref{e:26}.
\end{proof}

We will use the perspective function \(\rho\)
of a Huber loss,
to simplify handling the
minimum in the short-step formula \eqref{e:shortA} below.
\begin{equation}
\label{e:ph}
\rho\colon\RR\times\RR_{>0}\to\RR\colon(x,b)\mapsto
    \begin{cases}
      |x| - \frac{b}{2} & \text{if } |x| \geq b\\
      \frac{|x|^{2}}{2 b} & \text{if } |x| \leq b
      .
    \end{cases}
\end{equation}
\begin{fact}
\label{f:rho}
The function \(\rho\) is proper and jointly convex
\cite[Proposition~8.25,
Ex.~8.44]{Livre1}. Also, for $b>0$ and $x\geq 0$, note
$\rho(x,\cdot)$ is clearly decreasing on $\mathbb{R}_{>0}$;
$\rho(\cdot,b)$ is even and hence increasing on
$\mathbb{R}_{\geq 0}$ \cite[Proposition~11.7]{Livre1}; and
$x-\frac{b}{2}\leq \rho(x,b)$ as a tangent line of
the convex function $\rho(\cdot,b)$. Furthermore, $\rho$ is
subadditive \cite[Example~10.5]{Livre1}:
\begin{equation}
  \label{eq:1}
  \sum_{i=1}^{n} \rho(x_{i}, b_{i})
  \geq
  \rho
  \left(
    \sum_{i=1}^{n} x_{i}, \sum_{i=1}^{n} b_{i}
  \right)
  .
\end{equation}
\end{fact}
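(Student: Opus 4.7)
The plan is to recognize $\rho$ as the perspective of a standard one-dimensional convex Huber loss, which immediately triggers the cited results from \cite{Livre1} for all but the most elementary properties. Concretely, let $h\colon\RR\to\RR$ be given by $h(t)=|t|-\tfrac12$ if $|t|\geq 1$ and $h(t)=t^2/2$ if $|t|\leq 1$; a one-line piecewise check shows $\rho(x,b)=b\,h(x/b)$ for all $b>0$. The function $h$ is convex because its two convex pieces agree in value ($\tfrac12$) and in one-sided derivative ($\pm 1$) at $t=\pm 1$, so Proposition~8.25 and Example~8.44 of \cite{Livre1} apply to give that $\rho$ is proper and jointly convex on $\RR\times\RR_{>0}$.

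Next I would dispatch the elementary symmetry and monotonicity claims. Evenness of $\rho(\cdot,b)$ is immediate from the definition since only $|x|$ appears, and evenness combined with convexity puts us exactly in the setting of Proposition~11.7 of \cite{Livre1}, yielding that $\rho(\cdot,b)$ is nondecreasing on $\RR_{\geq 0}$. Monotonicity in $b$ at fixed $x\geq 0$ follows by inspection of the two pieces: on $|x|\geq b$ the formula $|x|-b/2$ decreases in $b$, on $|x|\leq b$ the formula $x^2/(2b)$ decreases in $b$, and the pieces match continuously at $|x|=b$. The tangent-line bound $x-b/2\leq \rho(x,b)$ is a direct consequence of convexity of $\rho(\cdot,b)$ together with the fact that $x-b/2$ coincides with $\rho(x,b)$ on $|x|\geq b$ and has slope $+1$ at the join $x=b$; alternatively, a rearrangement on the inner region reduces it to $(|x|-b)^2\geq 0$.

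The one genuinely structural claim is subadditivity \eqref{eq:1}, and this is where the perspective identification really pays off. By construction, perspective functions are positively homogeneous of degree one in the pair $(x,b)$. Combining positive homogeneity with joint convexity, a standard two-term argument gives
\begin{equation*}
\rho(x_1+x_2,\,b_1+b_2)
= 2\,\rho\!\left(\tfrac{x_1+x_2}{2},\tfrac{b_1+b_2}{2}\right)
\leq \rho(x_1,b_1)+\rho(x_2,b_2),
\end{equation*}
and induction on $n$ yields \eqref{eq:1}; this is precisely the sublinearity characterization of Example~10.5 in \cite{Livre1}.

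The only obstacle worth flagging, and it is a mild one, is keeping the second argument in the admissible open half-line throughout: positive homogeneity and subadditivity must be applied on $\RR\times\RR_{>0}$, not on its closure, because $\rho(x,0)$ is not defined. This causes no difficulty in \eqref{eq:1} since each $b_i>0$ forces $\sum_i b_i>0$, so every invocation of $\rho$ lands in the interior of the domain and the above reasoning applies verbatim.
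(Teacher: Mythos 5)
Your proposal is correct and follows essentially the same route as the paper, which states these properties as a Fact by viewing $\rho$ as the perspective of the Huber loss and citing the standard results (joint convexity of perspectives, evenness plus convexity giving monotonicity, and subadditivity via positive homogeneity and convexity); you simply fill in the routine verifications, including the correct reduction of the tangent-line bound to $(|x|-b)^2\geq 0$ and the two-term homogeneity--convexity argument for \eqref{eq:1}. No gaps.
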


\begin{lemma}
  \label{lem:max-quad}
  Let \(x\), \(c\) be nonnegative numbers and let 
  \(b\) be a positive number. Then,
  \begin{equation}
    \label{eq:max-quad}
    \frac{(x - c)^{2}}{2 b} + c
    \geq
\rho(x,b)
  \end{equation}
\end{lemma}
  \begin{proof}
Fixing \(x\) and \(b\),
the left-hand side of \eqref{eq:max-quad} is a quadratic function of
\(c\) with minimum attained at
\(c = x - b\) for \(x \geq b\),
and \(c = 0\) for \(x \leq b\).
Thus,
\begin{equation}
\quad\text{if \(x \geq b\), then}\quad
  \frac{(x - c)^{2}}{2 b} + c
  \geq
  x - \frac{b}{2}
  ;\text{ if \(x \leq b\), then}\quad
  \frac{(x - c)^{2}}{2 b} + c
  \geq
  \frac{x^{2}}{2 b}
  .
\end{equation}
\end{proof}
The following takes inspiration from \cite{Beck13}
and includes a nonmonotone sequence $(a_t)_{t\in\NN}$ representing
extra progress.
\begin{lemma}
\label{l:recursion}
Let $h_t$ and $a_t$ be nonnegative numbers for \(t \in \NN\),
let $b>0$, let $\rho$ be given by \eqref{e:ph}, and
suppose that
$h_t-h_{t+1}\geq\rho(h_t+a_t,b)$
for every $t\in\NN$.
Then $(h_t)_{t\in\NN}$ decreases monotonically and 
\begin{equation}
(\forall t\in\NN)\quad
h_t\leq\begin{cases}
\frac{b}{2}-a_0&\text{if\;\;}t=1\\
\dfrac{2b}{t-1+
\frac{2b}{h_{1}}+
\sum_{k=1}^{t-1}\frac{2a_k}{h_{1}}
+\left(\frac{a_k}{h_{1}}\right)^2
}
&\text{if\;\;}t\geq 2.
\end{cases}
\end{equation}
\end{lemma}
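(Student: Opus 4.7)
My plan is to prove the two assertions separately, treating the rate as an induction after first recognizing a cleaner form of the denominator: using $(1 + a_k/h_1)^2 - 1 = 2a_k/h_1 + (a_k/h_1)^2$, the stated bound for $t \geq 2$ is equivalent to $h_t \leq 2b/M_t$ with $M_t \coloneqq 2b/h_1 + \sum_{k=1}^{t-1}(1 + a_k/h_1)^2$. Monotonicity is immediate from Fact~\ref{f:rho}, since $\rho \geq 0$ forces $h_{t+1} \leq h_t$. The base case $t=1$ follows from the tangent inequality $\rho(x,b) \geq x - b/2$ of Fact~\ref{f:rho} applied at $t=0$: $h_0 - h_1 \geq \rho(h_0 + a_0, b) \geq h_0 + a_0 - b/2$, so $h_1 \leq b/2 - a_0$.

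For the inductive step, assume $h_t \leq 2b/M_t$; the degenerate subcases $h_t = 0$ or $h_{t+1} = 0$ make the target hold automatically, so I work with strictly positive values. The argument splits on whether $h_t + a_t$ lies below or above $b$. In the small case $h_t + a_t \leq b$, one has $\rho(h_t + a_t, b) = (h_t + a_t)^2/(2b)$, so $h_t - h_{t+1} \geq (h_t + a_t)^2/(2b)$. Dividing by $h_t h_{t+1}$ and absorbing one factor via $h_{t+1} \leq h_t$ gives the reciprocal inequality $1/h_{t+1} - 1/h_t \geq (1 + a_t/h_t)^2/(2b)$. Scaling by $2b$ and using monotonicity ($a_t/h_t \geq a_t/h_1$) together with the induction hypothesis then yields $2b/h_{t+1} \geq 2b/h_t + (1 + a_t/h_1)^2 \geq M_t + (1 + a_t/h_1)^2 = M_{t+1}$, closing the induction in this case.

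The main obstacle is the large case $h_t + a_t \geq b$, because the tangent inequality yields $h_{t+1} \leq b/2 - a_t$, a bound independent of $h_t$, so the reciprocal telescoping breaks down. When $a_t \geq b/2$ one concludes $h_{t+1} = 0$ and the bound is vacuous; when $a_t < b/2$, one has $2b/h_{t+1} \geq 4b/(b - 2a_t)$. The key observation is that the case hypothesis $h_t \geq b - a_t$, combined with $h_t \leq h_1$ (monotonicity) and $h_t \leq 2b/M_t$ (induction), simultaneously forces $h_1 \geq b - a_t$ and $M_t \leq 2b/(b - a_t)$. These cap $M_{t+1}$: since $(1 + a_t/h_1)^2 \leq (b/(b - a_t))^2$, one gets $M_{t+1} \leq 2b/(b - a_t) + b^2/(b - a_t)^2$. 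Completing the step then reduces to the elementary identity $4(b - a_t)^2 - (3b - 2a_t)(b - 2a_t) = b^2 \geq 0$, which confirms $4b/(b - 2a_t) \geq M_{t+1}$ and finishes the induction.
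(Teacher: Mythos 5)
Your proof is correct, and its quadratic-regime core is the same computation as the paper's: your reciprocal inequality $\tfrac{1}{h_{t+1}}-\tfrac{1}{h_t}\geq\tfrac{1}{2b}\bigl(1+\tfrac{a_t}{h_1}\bigr)^2$ is exactly the paper's inequality \eqref{e:b} after expanding the square, and your induction with $M_t$ is just its telescoped form (the base case $t=1$, namely $h_1\leq 2b/M_1=h_1$, is left implicit but trivial). Where you genuinely diverge is the linear regime $h_t+a_t\geq b$: the paper shows this case simply cannot occur for $t\geq 1$, because $h_1\leq b/2-a_0\leq b/2=\rho(b,b)$ and $\rho(h_t+a_t,b)\leq h_t-h_{t+1}\leq h_t\leq h_1$, so strict monotonicity of $\rho(\cdot,b)$ forces $h_t+a_t\leq b$; hence only the quadratic branch of $\rho$ is ever needed and no case split arises in the summation. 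You instead treat the linear branch head-on inside the induction, using $h_t\geq b-a_t$ together with $h_t\leq h_1$ and $h_t\leq 2b/M_t$ to cap $M_{t+1}$ by $\tfrac{2b}{b-a_t}+\tfrac{b^2}{(b-a_t)^2}$ and closing with the identity $4(b-a_t)^2-(3b-2a_t)(b-2a_t)=b^2$ (all of which I checked). Your route costs extra case analysis and algebra but shows the recursion is robust even without the a priori restriction to the quadratic branch; the paper's monotonicity observation buys a shorter, induction-free argument by a single summation.
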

\begin{proof}
Since
$h_t-h_{t+1}\geq
\rho(h_{t}+a_{t},b)\geq 0$, the sequence $(h_t)_{t\in\NN}$ is
decreasing. Since $x-\frac{b}{2}\leq\rho(x,b)$, our
recursion yields $h_0+a_0-b/2\leq h_0-h_1$, and rearranging proves
\(h_{1} \leq b/2 - a_{0}\).
Next, we observe that since $(h_t)_{t\in\NN}$ is monotonic and
$\rho$ is strictly monotonically increasing in its first argument,
for every $k\geq 1$,
we have $\rho(b,b)=\frac{b}{2}\geq h_1\geq h_k\geq h_k-h_{k-1}\geq
\rho(h_k+a_k,b)$, so $h_k+a_k\leq b$ and hence
\(\rho(h_{k} + a_{k}, b) = (h_{k} + a_{k})^{2} / (2b)\).
Now, fix $t\in\NN\setminus\{0\}$. 
If $h_{t+1}=0$, we are done; otherwise, by monotonicity we have
$0<h_{t+1}\leq \cdots\leq h_{1}$. So, 
\begin{equation}
\begin{split}
(\forall k\in\{1,\ldots,t\})\quad
\frac{1}{h_{k+1}}-\frac{1}{h_k}&
=\frac{h_k-h_{k+1}}{h_kh_{k+1}}
\geq
\frac{(h_k+a_k)^2}{2bh_kh_{k+1}}
\\
&=\frac{1}{2b}\left(\frac{h_k}{h_{k+1}}+\frac{2 a_k}{h_{k+1}}+\frac{a_k^2}{h_kh_{k+1}}\right)\\
\label{e:b}
&\geq
\frac{1}{2b}\left(1+\frac{2 a_k}{h_{1}}+\left(\frac{a_k}{h_{1}}\right)^2\right).
\end{split}
\end{equation}
We sum \eqref{e:b} over $k\in\{1,\ldots,t\}$ to find
\begin{equation}
\label{e:b2}
\frac{1}{h_{t+1}}-\frac{1}{h_{1}}
\geq
\frac{1}{2b}\left(
t+
\sum_{k=1}^t\frac{2 a_k}{h_{1}}
+\left(\frac{a_k}{h_{1}}\right)^2\right),
\end{equation}
and rearranging \eqref{e:b2} completes the result.
\end{proof}

\subsection{Generic form of BCFW}
\label{sec:steps}
Consider the generic form of the block-coordinate
Frank-Wolfe algorithm shown in Algorithm~\ref{a:bcgg}.
The selection strategies of the blocks
$(I_t)_{t\in\NN}$ in
\cite{Beck15,Laco13,Patr98} arise as special cases.
\begin{algorithm}[ht]
\caption{Block-Coordinate Frank-Wolfe (BCFW), Generic form}
\label{a:bcgg}  
\begin{algorithmic}[1]
\REQUIRE Function $f\colon\CCX\to\mathbb{R}$, gradient $\nabla f$,
point $\bx_0\in\CCX$, linear minimization oracles $(\lmo_i)_{i\in
I}$
\FOR{$t=0, 1$ \textbf{to} $\dotsc$}
\STATE Choose a nonempty block $I_t\subset I$
\STATE $\bg_t\leftarrow \nabla f(\bx_t)$
\FOR{$i=1$ \textbf{to} $m$}
\IF{$i\in I_t$}
\label{l:blockg}
\STATE $\bv_{t}^i\leftarrow\lmo_i(\bg_t^i)$
\STATE\label{l:stepg}
$\gamma_t^i
 \leftarrow $ Step size parameter (see also
Sections~\ref{sec:adaptive-step-size}, \ref{sec:ss})
\STATE \label{l:updateg} $\bx_{t+1}^i \leftarrow
\bx_{t}^i+\gamma_t^i(\bv_t^i-\bx_t^i)$
\ELSE
\STATE \label{l:lazyg}
$\bx_{t+1}^i\leftarrow \bx_{t}^i$
\ENDIF
\ENDFOR
\ENDFOR
\end{algorithmic}
\end{algorithm}
\begin{remark}
\label{r:short-motiv}
For $L_f$-smooth objective functions $f$, 
the smoothness inequality \eqref{e:2} and
Line~\ref{l:updateg} of Algorithm~\ref{a:bcgg}
yield
\begin{equation}
\label{e:short-motiv}
f(\bx_{t+1})-f(\bx_t)
\leq\sum_{i\in I_t}\gamma_t^i\scal{\nabla^i
f(\bx_{t})}{\bv_t^i-\bx_t^i}+
\dfrac{L_f}{2}(\gamma_t^i)^2\|\bv_{t}^i-\bx_t^i\|^2.
\end{equation}
To tighten the bound \eqref{e:short-motiv}, a common step size
choice is to minimize the summands via a componentwise analogue of
the so-called \emph{short-step} \cite{Demy70,Beck15}:
\begin{equation}
\label{e:shortA}
\begin{aligned}
\gamma_t^i &= \Argmin_{\gamma\in\left[0,1\right]}\left(-\gamma
  G_i(\bx_t)+\gamma^2\frac{L_f }{2}
  \|\bv_t^i-\bx_t^i\|^2\right)\\
&=\min\left\{\dfrac{G_i(\bx_t)}{L_f \|\bv_t^i-\bx_t^i\|^2},
  1\right\}.
\end{aligned}
\tag{short}
\end{equation}
This is analyzed in Sections~\ref{sec:convss} and \ref{sec:ss} for
convex and nonconvex objectives respectively.
Section~\ref{sec:flex3} addresses a situation where a similar
update to \eqref{e:shortA} is performed using an estimation.
\end{remark}

This work focuses on using short-step step sizes that rely on $L_f$
(or its estimator in Section~\ref{sec:flex3}). In
contrast, the following
examples demonstrate that Algorithm~\ref{a:bcgg}
need not converge using componentwise analogues of classical F-W
step sizes.
\begin{example}[Non-convergent componentwise line search]
\label{ex:nonconv}
Frank-Wolfe methods at a point $\bx_t$ with vertex $\bv_t$ are
commonly known to converge where step sizes are selected by a
line search, i.e., $\gamma_t=\Argmin_{\gamma\in[0,1]}
f(\bx_t+\gamma(\bv_t-\bx_t))$ \cite{CGSurvey}.
However, when line search step sizes are chosen componentwise, namely
via
\begin{equation}
\label{e:19}
  \gamma_t^i \in \Argmin_{\gamma\in[0,1]}
  f\big(\bx_t+\gamma (\bv_t^i-\bx_t^i)\big),
\end{equation}
Algorithm~\ref{a:bcgg} need not converge.
  Let \(I \coloneqq \{1, 2\}\),
  \(\HH_{1} = \HH_{2} = \RR\),
  \(C_{1} = C_{2}= [-1, 1]\),
  and \(f(\bx) \coloneqq (\bx^1+\bx^2)^{2}\);
  in particular, \(L_{f} = 4\).
  The minimal function value of \(0\) is
  attained at the points \(\bx\)
  for which \(\bx^{1} = - \bx^{2}\). 
  With full block activation $I_t=\{1,2\}$ and componentwise 
  line search \eqref{e:19}, the iterates of
  Algorithm~\ref{a:bcgg} satisfy
  \(\bx_{t+1}^{1} = - \bx_{t}^{2}\)
  and \(\bx_{t+1}^{2} = - \bx_{t}^{1}\).
  Hence, a possible sequence of iterates is
  \(((-1)^{t}, (-1)^{t})\),
  which does not converge to optimality in function value.
\end{example}

\begin{example}[Non-convergent componentwise short-step]
\label{ex:nonconv2}
In singleton-update cyclic schemes, it is possible to use
a variant of \eqref{e:shortA} where, for every $i\in I$, $L_f$ is
replaced by the Lipschitz constant \(\beta_{i}\) of $\nabla f$
over the component $C_i$. More precisely, componentwise
short-steps $\gamma_t^i=\min\{1,
G_i(\bx_t)/\beta_i \|\bv_t^i-\bx_t^i\|^2\}$ allow for larger
step sizes \cite{Beck15}, since $\beta_i\leq L_f$.
However, in Example~\ref{ex:nonconv}, $\beta_1=\beta_{2}=2\neq
4=L_f$, i.e.,
$\gamma_t^i$ is the same as in Example~\ref{ex:nonconv}.
Therefore, using Algorithm~\ref{a:bcgg} with $I_t=\{1,2\}$, this
short-step variant may produce the same iterates as
Example~\ref{ex:nonconv},
which do not get close to the optimal solution.
\end{example}

The following technical lemma is for combining with inequalities of
the form
\(f(\bx_{t}) - f(\bx_{t+1}) \geq \tau P_{t}\) ($\tau> 0$)
to construct convergence results in
Sections~\ref{sec:adaptive-step-size}
and \ref{sec:ss}. For $\tau=1$, the above inequality naturally
arises as a consequence of convexity and smoothness (seen in
Fact~\ref{f:smooth3}). 
The $(M_t)_{t\in\NN}$ play the role of approximate smoothness
constants.
\begin{lemma}
  \label{lem:short-progress}
Let $\CCX\subset\HHH$ be a finite product
of nonempty compact convex sets \(C_{i}\), let $D$ be the diameter of
$\CCX$, let
$f\colon\CCX\to\mathbb{R}$ be Fréchet differentiable,
let $K$ be a positive integer,
let $(M_t)_{t\in\NN}$ be a sequence of positive numbers,
and for every $J\subset I$, let $G_J$ be given by \eqref{e:Jgap}
and set $\bv^J_t=\lmo_J(\bg_t)$. 
In the setting of Algorithm~\ref{a:bcgg},
for every $t\in\NN$ and $J\subset I$, set $\bv^J_t=\lmo_J(\bg_t)$,
set
\begin{equation}
P_t=
   \scal{\bg_{t}}{\bx_{t} - \bx_{t+1}}
    - \frac{M_{t+1} \|\bx_{t} - \bx_{t+1}\|^{2}}{2}
    +
    \frac{\|\bg_{t}^{I \setminus I_{t}}
      - \bg_{t+1}^{I \setminus I_{t}}\|^{2}}{2 M_{t+1}},
\end{equation}
set $A_t=\sum_{k=1}^{K-1} G_{I_{t+k-1}\cap
(I_{t+k}\cup\cdots\cup I_{t+K-1})}(\bx_{t+k})\geq 0$, 
and for every $i\in I_t$ let Line~\ref{l:stepg} be specified by
\begin{equation}
  \gamma_t^i=\min\bigg\{1,\frac{G_i(\bx_t)}{M_{t+1}
    \|\bx_t^i-\bv_t^i\|^2}\bigg\}.
\end{equation}
Then, for every $t\in\NN$, the following hold:
\begin{enumerate}
\item
    \label{l:block-progressb}
$(\forall J \subseteq I\setminus I_{t})\;\;
P_t
    \geq
    \rho\big(|G_{I_{t} \cup J}(\bx_{t}) - \scal{\bg_{t+1}^{J}}{
        \bx_{t}^{J} - \bv_{t}^{J}}|,
      M_{t+1} \|\bx_{t}^{I_{t} \cup J} - \bv_{t}^{I_{t} \cup
J}\|^{2}\big)
$.
\item
    \label{l:block-progressa}
$
\sum_{k=0}^{K-1}P_{t+k}
    \geq
    \rho \left(
      G_{I_{t} \cup \dots \cup I_{t+K-1}}(\bx_{t}) + A_{t},
      \sum_{k=1}^{K} M_{t+k} D^{2}
    \right)
      .$
\end{enumerate}
\end{lemma}
\begin{proof}
Let $i\in I_t$. We claim
\begin{equation}
  \label{e:25}
  \scal{\bg_{t}^{i}}{\bx_{t}^{i} - \bx_{t+1}^{i}}
  - \frac{M_{t+1} \|\bx_{t}^{i} - \bx_{t+1}^{i}\|^{2}}{2}
  =
  \rho(\scal{\bg_{t}^{i}}{\bx_{t}^{i} - \bv_{t}^{i}},
  M_{t+1} \|\bx_{t}^{i} - \bv_{t}^{i}\|^{2})
  .
\end{equation}
We distinguish two cases depending on \(\gamma_{t}^i\).
If
\(\scal{\bg_{t}^{i}}{\bx_{t}^{i}-\bv_{t}^{i}} \geq
M_{t+1}\|\bx_{t}^{i}-\bv_{t}^{i}\|^{2}\)
then
\(\gamma_{t}^i = 1\) and \(\bx_{t+1}^{i} = \bv_{t}^{i}\),
therefore we find
\begin{equation}
\label{eq:26}
\begin{aligned}
  \scal{\bg_{t}^{i}}{\bx_{t}^{i} - \bx_{t+1}^{i}}
  - \frac{M_{t+1} \|\bx_{t}^{i} - \bx_{t+1}^{i}\|^{2}}{2}
 & =
  \scal{\bg_{t}^{i}}{\bx_{t}^{i} - \bv_{t}^{i}}
  - \frac{M_{t+1} \|\bx_{t}^{i} - \bv_{t}^{i}\|^{2}}{2}
  \\
 & =
  \rho(\scal{\bg_{t}^{i}}{\bx_{t}^{i} - \bv_{t}^{i}},
  M_{t+1} \|\bx_{t}^{i} - \bv_{t}^{i}\|^{2})
  .
\end{aligned}
\end{equation}
If $\scal{\bg_t^i}{\bx_t^i-\bv_t^i}\leq
M_{t+1}\|\bx_t^i-\bv_t^i\|^2$,
then \(\gamma_{t}^i =
\scal{\bg_{t}^{i}}{\bx_{t}^{i} - \bv_{t}^{i}} /
(M_{t+1} \|\bx_{t}^{i}-\bv_{t}^{i}\|^{2})\), so
\begin{equation}
\begin{aligned}
  \scal{\bg_{t}^{i}}{\bx_{t}^{i} - \bx_{t+1}^{i}}
  - \frac{M_{t+1} \|\bx_{t}^{i} - \bx_{t+1}^{i}\|^{2}}{2}
&  =
  \frac{\scal{\bg_{t}^{i}}{\bx_{t}^{i} - \bv_{t}^{i}}^{2}}{2 M_{t+1}
    \|\bx_{t}^{i} - \bv_{t}^{i}\|^{2}}\\
&  = \rho(\scal{\bg_{t}^{i}}{\bx_{t}^{i} - \bv_{t}^{i}},
    M_{t+1} \|\bx_{t}^{i} - \bv_{t}^{i}\|^{2})
    .
\end{aligned}
\end{equation}
Summing up \eqref{e:25} for \(i \in I_{t}\) and using
subadditivity of \(\rho\) in Fact~\ref{f:rho}, we obtain
\ref{l:block-progressb}
for \(J = \varnothing\).
To show \ref{l:block-progressb} for arbitrary \(J \subseteq I \setminus I_{t}\),
we use an additional norm inequality, then
\ref{l:block-progressb} for \(J = \varnothing\),
and
Lemma~\ref{lem:max-quad} (with $c=0$), followed by subadditivity
and monotonicity of \(\rho\) \eqref{eq:1}:
\begin{align*}
  P_{t}
&  =
  \scal{\bg_{t}}{\bx_{t} - \bx_{t+1}}
  - \frac{M_{t+1} \|\bx_{t} - \bx_{t+1}\|^{2}}{2}
  +
  \frac{\|\bg_{t}^{I \setminus I_{t}}
    - \bg_{t+1}^{I \setminus I_{t}}\|^{2}}{2 M_{t+1}}
  \\
&  \geq
  \scal{\bg_{t}}{\bx_{t} - \bx_{t+1}}
  - \frac{M_{t+1} \|\bx_{t} - \bx_{t+1}\|^{2}}{2}
  +
  \frac{\scal{\bg_{t}^{J} - \bg_{t+1}^{J}}{\bx_{t}^{J} -
      \bv_{t}^{J}}^{2}}{2 M_{t+1} \|\bx_{t}^{J} - \bv_{t}^{J}\|^{2}}
  \\
&\begin{multlined}
  \geq \rho(\scal{\bg_{t}^{I_{t}}}{\bx_{t}^{I_{t}} - \bv_{t}^{I_{t}}},
  M_{t+1} \|\bx_{t}^{I_t} - \bv_{t}^{I_t}\|^{2})\\
  +
  \rho(|\scal{\bg_{t}^{J} - \bg_{t+1}^{J}}{\bx_{t}^{J} -
      \bv_{t}^{J}}|, M_{t+1} \|\bx_{t}^{J} - \bv_{t}^{J}\|^{2})
\end{multlined}
  \\
 & \geq
  \rho(|G_{I_{t} \cup J}(\bx_{t}) - \scal{\bg_{t+1}^{J}}{
      \bx_{t}^{J} - \bv_{t}^{J}}|,
    M_{t+1} \|\bx_{t}^{I_{t} \cup J} - \bv_{t}^{I_{t} \cup J}\|^{2})
  .
\end{align*}
Next, to show \ref{l:block-progressa}, we begin by using
\ref{l:block-progressb} with monotonicity of \(\rho\):
\begin{equation}
\label{e:f1}
(\forall J\subset I)\quad
P_t
  \geq
    \rho(|G_{J}(\bx_{t})
    - \scal{\bg_{t+1}^{J \setminus I_{t}}}{
      \bx_{t}^{J \setminus I_{t}}
      - \bv_{t}^{J \setminus I_{t}}}|,
    M_{t+1} D^{2})
  .
\end{equation}
Summing up \eqref{e:f1} for $k\in\{t,\ldots,t+K-1\}$
with the sets $  J_{k} \coloneqq \bigcup_{j=t+k}^{t+K-1}
I_{j}$,
we bound the righthand side
using monotonicity and subadditivity of $\rho$:
\begin{equation}
 \begin{split}
\sum_{k=0}^{K-1}P_{t+k}
&  \geq
  \sum_{k=0}^{K-1}
  \rho(|G_{J_{k}}(\bx_{t+k})
    - \scal{\bg_{t+k+1}^{J_{k} \setminus I_{t+k}}}{
      \bx_{t+k}^{J_{k} \setminus I_{t+k}}
      - \bv_{t+k}^{J_{k} \setminus I_{t+k}}}|,
    M_{t+k+1} D^{2})
  \\
&  \geq
  \rho\left(
    \tilde{G},
    \sum_{k=1}^{K} M_{t+k} D^{2}
  \right)
  ,
 \end{split}
\end{equation}
where, using Line~\ref{l:lazyg} of Algorithm~\ref{a:bcgg} and the
convention that
$G_{\varnothing}(\bx_{t+K-1})=0$,
\begin{equation*}
 \begin{split}
  \tilde{G}
&  \coloneqq
  \sum_{k=0}^{K-1}
  G_{J_{k}}(\bx_{t+k})
  - \scal{\bg_{t+k+1}^{J_{k} \setminus I_{t+k}}}{
    \bx_{t+k}^{J_{k} \setminus I_{t+k}}
    - \bv_{t+k}^{J_{k} \setminus I_{t+k}}}
  \\
&  =
  \sum_{k=0}^{K-1}
  G_{J_{k}}(\bx_{t+k}) - G_{J_{k} \setminus I_{t+k}}(\bx_{t+k+1})
  + \scal{\bg_{t+k+1}^{J_{k} \setminus I_{t+k}}}{
    \bv_{t+k}^{J_{k} \setminus I_{t+k}}
    - \bv_{t+k+1}^{J_{k} \setminus I_{t+k}}}\\
&
\begin{multlined}
=
G_{J_0}(\bx_{t}) +
  \sum_{k=1}^{K-1}
\left(  G_{J_{k}}(\bx_{t+k}) - G_{J_{k} \setminus
I_{t+k-1}}(\bx_{t+k})\right)
\\
  +   \sum_{k=0}^{K-1}\scal{\bg_{t+k+1}^{J_{k} \setminus I_{t+k}}}{
    \bv_{t+k}^{J_{k} \setminus I_{t+k}}
    - \bv_{t+k+1}^{J_{k} \setminus I_{t+k}}}
\end{multlined}
  \\
&
\begin{multlined}
  =
  G_{I_{t} \cup \dots \cup I_{t+K-1}}(\bx_{t}) % J_{0}
  +
  \sum_{k=1}^{K-1}
  G_{I_{t+k-1} \cap J_k}(\bx_{t+k})
\\
  +
  \sum_{k=0}^{K-1}
  \scal{\bg_{t+k+1}^{J_{k} \setminus I_{t+k}}}{
    \bv_{t+k}^{J_{k} \setminus I_{t+k}}
    - \bv_{t+k+1}^{J_{k} \setminus I_{t+k}}}
\end{multlined}
  \\
&  \geq
  G_{I_{t} \cup \dots \cup I_{t+K-1}}(\bx_{t})
  +
  \sum_{k=1}^{K-1}
  G_{I_{t+k-1} \cap J_k}(\bx_{t+k})
  \geq 0
  .
 \end{split}
\end{equation*}
The last two inequalities use nonnegativity of
all the summands involved,
relying on minimality of the points \(\bv_{t+k+1}\).
Finally, using monotonicity of \(\rho\) again:
\begin{equation*}
\sum_{k=0}^{K-1}P_{t+k}
  \geq
  \rho \left(
    \tilde{G}, \sum_{k=1}^{K} M_{t+k} D^{2}
  \right)
    \geq
    \rho \left(
      G_{I_{t} \cup \dots \cup I_{t+K-1}}(\bx_{t}) + A_{t},
      \sum_{k=1}^{K} M_{t+k} D^{2}
    \right)
    ,
\end{equation*}
which completes the proof.
\end{proof}

\begin{remark}[Interpretation of the gaps $A_t$ in
Lemma~\ref{lem:short-progress}]
\label{r:extra}
For every $t\in\NN$, each of the following summands in the lower
bound of Lemma~\ref{lem:short-progress}
\begin{equation}
\label{e:Aextra}
A_t=\sum_{k=1}^{K-1} G_{I_{t+k-1}\cap (I_{t+k}\cup\cdots\cup
I_{t+K-1})}(\bx_{t+k})\geq 0
\end{equation}
is a partial Frank-Wolfe gap for components
that are updated more than once between iterations $t$ and
$t+K-1$. Via Lemma~\ref{lem:pcb}, 
for each collection of reactivated components $J\subset I$,
if $f$ is convex then each summand can be bounded by
\begin{equation}
\label{e:extra}
G_{J}(\bx_{t+k})\geq f(\bx_{t+k}) -
\min_{\substack{\bx\in\bigtimes_{i\in
I}C_i\\ \bx^{I\setminus J}=\bx_{t+k}^{I\setminus J}}} f(\bx)\geq 0.
\end{equation}
As will be seen in Sections~\ref{sec:adaptive-step-size} and
\ref{sec:ss}, the gaps \(A_{t}\) contribute to faster
convergence, and they may explain the favorable behavior
observed in Section~\ref{sec:num}.
However, in general, \(A_{t}\) may not always be strictly
positive. Hence, we do not know how to utilize these gaps to
construct
a worst-case rate which is better than the cyclic-type rates of
$\mathcal{O}(K/t)$ for
convex objective functions (Section~\ref{sec:adaptive-step-size}) and
$\mathcal{O}(\sqrt{K/t})$ for nonconvex
objectives (Section~\ref{sec:ss}). We conjecture that under
additional hypotheses (potentially hemivariance, which has been
successfully used in other block-coordinate problems
\cite{Tsen01}), these gaps may lead to an improved convergence
result.
\end{remark}

\section{Convex objective functions}
\label{sec:adaptive-step-size}

In this section we show that under two step size regimes, using
Algorithm~\ref{a:bcgg} with Assumption~\ref{a:1}, the primal gap of
a convex objective function is guaranteed to
converge at a rate of $\mathcal{O}(K/t)$ after
$t$ iterations. Section~\ref{sec:flex3} is
devoted to an adaptive step size scheme whereby the constant
$L_f$ may be unknown a-priori. As a consequence, in
Section~\ref{sec:convss} we also achieve convergence for the
block-wise ``short-step'' variant of Frank-Wolfe (also sometimes
called ``adaptive'' \cite[Section~4.2]{Beck15}),
where an overestimation of $L_f$ is available.
Our convergence rates (Theorem~\ref{t:flex} and
Corollary~\ref{c:sc}) match for the special case of
cyclic activation \cite{Beck15}.

\subsection{Analysis for adaptive step sizes}
\label{sec:flex3}

In recent years, Frank-Wolfe methods have been developed to
address the situation where the smoothness constant of the
objective $L_f$ is not known. These \emph{backtracking}, or
\emph{adaptive} variants dynamically maintain an estimated smoothness
constant across iterations, typically ensuring that the smoothness
inequality \eqref{e:2} holds empirically between the current
iterate $\bx_t$ and the next iterate
$\bx_{t+1}$, at the expense of extra
gradient and/or function evaluations
\cite{Beck15,Pedr20,Poku23}. In this section, we present a
similar method for BCFW under Assumption~\ref{a:1}.

Our analysis relies on the following which, to the best of our
knowledge, first appeared in \cite{Haza16} and was later shown to
characterize convex smooth interpolability \cite{Tayl16}.
\begin{fact}[\cite{Haza16}]
\label{f:smooth3}
Let $f\colon\HHH\to\RR$ be convex and $L_f$-smooth on \(\HHH\).
Then,
\begin{equation}
  \label{eq:smooth3}
  (\forall \bx,\by \in \HHH) \quad
  f(\bx) - f(\by) - \scal{\nabla f(\by)}{\bx-\by}
  \geq
  \frac{\|\nabla f(\bx) - \nabla f(\by)\|^{2}}{2 L_{f}}.
\end{equation}
\end{fact}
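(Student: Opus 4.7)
The plan is to prove this standard strongly-coercive-type inequality by the classical trick of reducing to a smooth convex function whose minimum is attained at $\by$, then applying the descent lemma at $\bx$ to measure how far $\phi$ can drop in one gradient step.

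\textbf{Setup.} Fix $\by \in \HHH$ and introduce the auxiliary function
\begin{equation*}
\phi\colon\HHH\to\RR\colon\bz\mapsto f(\bz) - \scal{\nabla f(\by)}{\bz}.
\end{equation*}
Since subtracting a linear functional preserves both convexity and the Lipschitz constant of the gradient, $\phi$ is convex and $L_f$-smooth on $\HHH$. Moreover $\nabla\phi(\bz) = \nabla f(\bz) - \nabla f(\by)$, so $\nabla\phi(\by)=0$, and convexity therefore forces $\by$ to be a global minimizer of $\phi$.

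\textbf{Key step.} First I would apply the smoothness inequality \eqref{e:2} for $\phi$ (which holds since $\phi$ is $L_f$-smooth) with the two points $\bx$ and $\bx - \tfrac{1}{L_f}\nabla\phi(\bx)$. This gives
\begin{equation*}
\phi\!\left(\bx - \tfrac{1}{L_f}\nabla\phi(\bx)\right)
\leq \phi(\bx) - \tfrac{1}{L_f}\|\nabla\phi(\bx)\|^{2} + \tfrac{1}{2L_f}\|\nabla\phi(\bx)\|^{2}
= \phi(\bx) - \tfrac{1}{2L_f}\|\nabla\phi(\bx)\|^{2}.
\end{equation*}
Since $\by$ minimizes $\phi$, the left-hand side is bounded below by $\phi(\by)$, yielding
\begin{equation*}
\phi(\by) \leq \phi(\bx) - \frac{\|\nabla f(\bx) - \nabla f(\by)\|^{2}}{2L_f}.
\end{equation*}

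\textbf{Unpacking.} Expanding $\phi(\by) - \phi(\bx) = f(\by) - f(\bx) - \scal{\nabla f(\by)}{\by-\bx}$ and rearranging recovers the desired inequality \eqref{eq:smooth3}. There is no serious obstacle: the only minor care is verifying that $\by$ is indeed a minimizer of $\phi$, which is immediate from convexity and $\nabla\phi(\by)=0$; everything else is algebra. The argument uses only the descent lemma plus global minimality, both already available from the hypotheses.
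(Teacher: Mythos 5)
Your proof is correct: the auxiliary function $\phi(\bz)=f(\bz)-\scal{\nabla f(\by)}{\bz}$ is convex with $\by$ a global minimizer, and applying the descent inequality at $\bx-\tfrac{1}{L_f}\nabla\phi(\bx)$ gives exactly \eqref{eq:smooth3}; the paper itself offers no proof, stating this as a Fact cited from the literature, and your argument is the standard one behind that citation. One small wording point: the hypothesis is $L_f$-smoothness in the sense of the quadratic upper bound \eqref{e:2}, not Lipschitz continuity of $\nabla f$, but this is immaterial since your key step only needs \eqref{e:2} for $\phi$, which follows from \eqref{e:2} for $f$ because the linear terms cancel.
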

The interpolability result \cite{Tayl16} implies that
a function \(f\)
satisfying \eqref{eq:smooth3} for all \(\bx, \by\) in a convex set
has an extension to a convex \(L_{f}\)-smooth function on \(\HHH\),
and therefore, for simplicity of presentation, our results in
this section assume that \(f\) is already
extended, i.e., convex and \(L_{f}\)-smooth on \(\HHH\). For 
objective functions that cannot be extended to $\HHH$, see
Remark~\ref{r:smooth}.

Fact~\ref{f:smooth3} is particularly attractive for block-iterative
algorithms, where differences of gradients often arise as error
terms, while in
\eqref{eq:smooth3} the difference appears as a lower bound on
primal progress (further demonstrated in
Lemma~\ref{lem:rec}). This
feature is the key to obtaining the same constant factors in the
convergence guarantee as for traditional Frank-Wolfe algorithms,
e.g., in \cite[Theorem~2.2]{CGSurvey}.
Hence, instead of checking the
smoothness inequality as in \cite{Pedr20} or another consequence as
in \cite{Poku23}, Algorithm~\ref{a:abcg} checks \eqref{eq:smooth3}.
Note that Algorithm~\ref{a:abcg} can be  viewed as a version of
Algorithm~\ref{a:bcgg} where $\gamma_t^i$ is computed
with an adaptive subroutine (see also Remark~\ref{r:short-motiv}).

\begin{algorithm}[t]
\caption{Adaptive Block-Coordinate Frank-Wolfe}
\label{a:abcg}  
\begin{algorithmic}[1]
\REQUIRE Function $f\colon\CCX\to\mathbb{R}$, gradient $\nabla f$,
point $\bx_0\in\CCX$, linear minimization oracles $(\lmo_i)_{i\in
I}$,
smoothness estimation \(M_{0}>0\), and approximation
parameters $0<\eta\leq 1 < \tau$.
\FOR{$t=0, 1$ \textbf{to} $\dotsc$}
\STATE Choose a nonempty block $I_t\subset I$ (See
Assumption~\ref{a:1})
\STATE $\bg_t\leftarrow \nabla f(\bx_t)$
\STATE \label{l:lazy2}
  $\widetilde{\bx}_{t+1}^i \leftarrow\bx_{t}^i$
  for all \(i\in I\setminus I_t\)
\COMMENT{Indices outside
of $I_t$ unchanged}
\STATE  $\widetilde{M}_{t+1}\leftarrow \eta M_{t}$ \COMMENT{
Candidate smoothness constant for iteration $t+1$}
\FOR{$i\in I_t$}
\label{l:block2}
\STATE $\bv_{t}^i\leftarrow\lmo_i(\bg_t^i)$\label{l:lmo2}
\STATE\label{l:flex-gam}
 $\gamma_t^i \leftarrow \min\left\{
    1,
    \dfrac{\scal{\bg_t^i}{\bx_{t}^i - \bv_{t}^i}}{\widetilde{M}_{t+1} \|\bx_{t}^i-\bv_{t}^i\|^2}
  \right\}$
\STATE\label{l:update2}
  $\widetilde{\bx}_{t+1}^i\leftarrow
\bx_{t}^i+\gamma_t^i(\bv_{t}^i-\bx_{t}^i)$
\ENDFOR
\WHILE{$ f(\bx_t) - f(\widetilde{\bx}_{t+1}) - \scal{\nabla
f(\widetilde{\bx}_{t+1})}{\bx_{t} - \widetilde{\bx}_{t+1}}
<
\|\bg_{t} - \nabla f(\widetilde{\bx}_{t+1})\|^{2}/2
\widetilde{M}_{t+1}$}
\label{l:while}
\STATE \label{l:tau} $\widetilde{M}_{t+1}\leftarrow \tau \widetilde{M}_{t+1}$
\COMMENT{If \eqref{eq:smooth3} does not hold, increase the
smoothness estimate.}
\FOR{$i\in I_t$} 
\STATE Update $\gamma_t^i$ and $\widetilde{\bx}_{t+1}^i$
as in lines~\ref{l:flex-gam} and \ref{l:update2}.
\ENDFOR
\ENDWHILE \label{l:endwhile}
\STATE \label{l:inc} $\bx_{t+1}\leftarrow \widetilde{\bx}_{t+1}$ \COMMENT{ 
Guarantees that \eqref{eq:smooth3} holds for relevant points}
\STATE $M_{t+1}\leftarrow\widetilde{M}_{t+1}$
\ENDFOR
\end{algorithmic}
\end{algorithm}

\begin{remark}
\label{r:Mbound}
By Fact~\ref{f:smooth3}, for all convex $L_f$-smooth objective
functions $f$, the loop starting at Line~\ref{l:while} of
Algorithm~\ref{a:abcg} always terminates,
at latest the first time when \(\widetilde{M}_{t+1} \geq L_{f}\), potentially overshooting by a factor of $\tau$.
Hence \(M_{t+1}\) can only be at least \(\tau L_{f}\)
if the loop terminates immediately, i.e., without any multiplication
by \(\tau\) in Line~\ref{l:tau}.
Let \(t_{0}\) be the smallest nonnegative integer
with \(\eta^{t_{0}} M_{0} \leq \tau L_{f}\),
which exists unless \(M_{0} > \tau L_{f}\) and \(\eta = 1\).
Therefore,
\begin{align}
  \label{e:Mbound}
  M_{t} &= \eta^{t} M_{0} > \tau L_{f} & 1 \leq t < t_{0} \\
  M_{t} &\leq \tau L_{f} & t \geq t_{0}
  .
\end{align}
This is consistent with the behavior of other adaptive
constants in similar works \cite{Pedr20,Poku23}: Unless $M_0$ is
initialized above $\tau L_f$, $M_t$ underestimates $\tau L_f$.
\end{remark}

\begin{remark}
\label{r:lmo-savings}
Even though the adaptive step size strategy in Algorithm~\ref{a:abcg}
requires extra
function and gradient evaluations
(Lines~\ref{l:while}--\ref{l:endwhile}), the LMOs are only computed
once per iteration, namely in
Line~\ref{l:lmo2}.
In tandem with Assumption~\ref{a:1}, this allows
for flexible management of LMO costs.
\end{remark}

The following presents a lower bound on primal progress.

\begin{lemma}[Progress bound via smoothness and convexity
\eqref{eq:smooth3}]
  \label{lem:rec}
Let
\newline
$\CCX\subset\HHH$ be a product
of $m$ nonempty compact convex sets, 
let $D$ be the diameter of $\CCX$,
let $f\colon\HHH\to\mathbb{R}$, be convex and $L_f$-smooth,
let $\rho$ be given by \eqref{e:ph},
let \(\bx^{*}\) be a solution to \eqref{e:p},
and for every nonempty $J\subset I$ let
$G_J$ be given by \eqref{e:Jgap}. 
In the setting of Algorithm~\ref{a:abcg}, 
suppose that $K$ satisfies Assumption~\ref{a:1} and
set
$A_t=\sum_{k=1}^{K-1} G_{I_{t+k-1}\cap (I_{t+k}\cup\cdots\cup
I_{t+K-1})}(\bx_{t+k})\geq 0$.
Then $(f(\bx_t))_{t\in\NN}$ is monotonically decreasing and
\begin{equation}
    \label{eq:block-progressb}
(\forall t\in\NN)\quad
    f(\bx_{t}) - f(\bx_{t+K})
    \geq
    \rho \left(
      f(\bx_{t}) - f(\bx^{*}) + A_{t},
      \sum_{k=1}^{K} M_{t+k} D^{2}
    \right)
    .
  \end{equation}
\end{lemma}
\begin{proof}
Recall from Remark~\ref{r:Mbound} that in Algorithm~\ref{a:abcg}
the loop starting at Line~\ref{l:while} terminates,
and therefore the algorithm generates an infinite sequence of iterates
satisfying the first inequality of the following chain.
The second inequality is a simple norm estimation,
and the third one is a quadratic inequality,
not needing any assumption on the scalar products and norms. We
also make use of the fact $\bx_{t}^{I\setminus
I_t}=\bx_{t+1}^{I\setminus I_t}$.
\begin{equation}
  \label{e:921}
 \begin{split}
  f(\bx_t) - f(\bx_{t+1})
  &
  \geq
  \scal{\bg_{t+1}}{\bx_{t} - \bx_{t+1}}
  +
  \frac{\|\bg_{t} - \bg_{t+1}\|^{2}}{2 M_{t+1}}
  \\
  &
  =
  \scal{\bg_{t+1}^{I_{t}}}{\bx_{t}^{I_{t}} - \bx_{t+1}^{I_{t}}}
  +
  \frac{\|\bg_{t}^{I_{t}} - \bg_{t+1}^{I_{t}}\|^{2}}{2 M_{t+1}}
  +
  \frac{\|\bg_{t}^{I \setminus I_{t}}
    - \bg_{t+1}^{I \setminus I_{t}}\|^{2}}{2 M_{t+1}}
  \\
  &
\begin{multlined}
  \geq
  \scal{\bg_{t+1}^{I_{t}}}{\bx_{t}^{I_{t}} - \bx_{t+1}^{I_{t}}}
  +
  \frac{\scal{\bg_{t}^{I_{t}} - \bg_{t+1}^{I_{t}}}{\bx_{t} -
      \bx_{t+1}}^{2}}{2 M_{t+1} \|\bx_{t}^{I_{t}} -
      \bx_{t+1}^{I_{t}}\|^{2}}\\
  +
  \frac{\|\bg_{t}^{I \setminus I_{t}}
    - \bg_{t+1}^{I \setminus I_{t}}\|^{2}}{2 M_{t+1}}
\end{multlined}
  \\
  &
  \geq
  \scal{\bg_{t}^{I_{t}}}{\bx_{t}^{I_{t}} - \bx_{t+1}^{I_{t}}}
  - \frac{ M_{t+1} \|\bx_{t}^{I_{t}} - \bx_{t+1}^{I_{t}}\|^{2}}{2}
  +
  \frac{\|\bg_{t}^{I \setminus I_{t}}
    - \bg_{t+1}^{I \setminus I_{t}}\|^{2}}{2 M_{t+1}}
  \\
  &
  =
  \scal{\bg_{t}}{\bx_{t} - \bx_{t+1}}
  - \frac{ M_{t+1} \|\bx_{t} - \bx_{t+1}\|^{2}}{2}
  +
  \frac{\|\bg_{t}^{I \setminus I_{t}}
    - \bg_{t+1}^{I \setminus I_{t}}\|^{2}}{2 M_{t+1}}\\
 & = P_{t}
  ,
 \end{split}
\end{equation}
where \(P_{t}\) is the same as in Lemma~\ref{lem:short-progress}.
Monotonicity of $(f(\bx_t))_{t\in\NN}$ follows from
Lemma~\ref{lem:short-progress}\ref{l:block-progressb}.
Telescoping the lefthand sum of
\eqref{e:921} and invoking\newline
Lemma~\ref{lem:short-progress}\ref{l:block-progressa}
with Assumption~\ref{a:1}, we find
$f(\bx_t)-f(\bx_{t+K})\geq
\rho(G_{I}(\bx_t)+A_t,\sum_{k=1}^KM_{t+k}D^2)$.
Since $f$ is convex, by optimality of the LMO and \eqref{e:1}, we
have
\begin{equation}
G_{I}(\bx_{t})
\geq\scal{\nabla
f(\bx_t)}{\bx_t-\bx^*}\geq
f(\bx_{t}) - f(\bx^{*}),
\end{equation}
so
\eqref{eq:block-progressb} follows from
monotonicity of \(\rho\) (Fact~\ref{f:rho}).
\end{proof}

\begin{theorem}
\label{t:flex}
Let $\CCX\subset\HHH$ be a product
of $m$ nonempty compact convex sets,
let $D$ be the diameter of $\CCX$,
let $f\colon\HHH\to\mathbb{R}$ be convex and $L_f$-smooth,
let $\tau>1\geq\eta>0$ and $M_0>0$ be approximation parameters, 
let $\bx^*$ be a solution to \eqref{e:p},
and for every nonempty $J\subset I$ let
$G_J$ be given by \eqref{e:Jgap}. 
If \(\eta = 1\), we assume \(M_{0} \leq \tau L_{f}\)
and set \(n_{0} = 0\)\footnote{If $M_0>\tau
L_f$, then $f$ is also $M_0$-smooth, so this assumption is WLOG 
for notational convenience in the case $\eta=1$.};
otherwise, 
$n_0 \coloneqq \max\{
\lceil \log (\tau L_{f} / (\eta M_{0})) / (K \log \eta) \rceil, 0\}$.
In the setting of Algorithm~\ref{a:abcg}, 
suppose that $K$ satisfies Assumption~\ref{a:1}, and 
set
\[A_t=\sum_{k=1}^{K-1} G_{I_{t+k-1}\cap (I_{t+k}\cup\cdots\cup
I_{t+K-1})}(\bx_{t+k})\geq 0.\]
Then, in the first \(t\) iterations, Algorithm~\ref{a:abcg}
evaluates $f$ and $\nabla f$ at most
$t+1+\max\{0, \lceil
\log_{\tau}(\eta^{-t}L_f/M_0)\rceil\}$
times. Furthermore, for every $n\in\NN$,
$  f(\bx_{nK}) - f(\bx^*)$ is bounded above by
\begin{equation}
\label{e:cv-rate}
  \begin{cases}
\underset{0\leq p \leq n-1}{\min}\left\{
    \dfrac{K \eta^{p K} M_{0} D^{2}}{2}
-A_{pK}\right\}
    &\text{if } 1 \leq n \leq n_{0}+1 \\[1.5em]
    \dfrac{2 K \tau L_f D^2}{n - n_{0}+\sum_{p=n_0}^n
      \frac{2 A_{pK}}{f(\bx_{n_0})-f(\bx^*)}
      + \left(\frac{A_{pK}}{f(\bx_{n_0})-f(\bx^*)}\right)^2}
    &\text{if } n > n_{0} + 1
    .
  \end{cases}
\end{equation}
\end{theorem}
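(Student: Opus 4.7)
The plan is to combine the $K$-step recursion from Lemma~\ref{lem:rec}, namely $h_p - h_{p+1} \geq \rho(h_p + A_{pK}, B_p)$ with $h_p \coloneqq f(\bx_{pK}) - f(\bx^*)$ and $B_p \coloneqq D^2 \sum_{k=1}^K M_{pK+k}$, with the bound on $(M_t)$ from Remark~\ref{r:Mbound} and the master recursion of Lemma~\ref{l:recursion}. The two cases of the theorem correspond to two phases of $(M_t)$: an initial geometric-decay phase where $(M_t)$ is still above the smoothness threshold, followed by an eventual phase in which $M_t \leq \tau L_f$.

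For the oracle count, I would first observe that once the while loop at iteration $t$ terminates, its final computation of $\nabla f(\widetilde{\bx}_{t+1})$ doubles as $\bg_{t+1}$ for the next outer iteration, so the $(f,\nabla f)$ cost per outer iteration equals the number of while-loop checks performed. Writing $M_t = \eta^t M_0 \tau^{N_t}$ with $N_t$ the cumulative number of $\tau$-multiplications, Remark~\ref{r:Mbound} gives $M_t \leq \max\{\eta^t M_0,\tau L_f\}$, hence $N_t \leq \max\{0, \lceil \log_\tau(\eta^{-t} L_f / M_0)\rceil\}$; adding the $t+1$ baseline evaluations (one per outer iteration plus the initial $\nabla f(\bx_0)$) produces the stated count.

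For Case~1 ($1 \leq n \leq n_0 + 1$), I would apply the tangent-line bound $\rho(x, b) \geq x - b/2$ from Fact~\ref{f:rho} to the core recursion to obtain $h_{p+1} \leq B_p/2 - A_{pK}$. Using the definition of $n_0$ (the smallest integer with $\eta^{n_0 K + 1} M_0 \leq \tau L_f$) together with Remark~\ref{r:Mbound}, one verifies that $M_{pK+k} \leq \eta^{pK} M_0$ for $k \in \{1, \ldots, K\}$ and $p$ in the relevant range, hence $B_p \leq K \eta^{pK} M_0 D^2$. Combining with monotonicity $h_n \leq h_{p+1}$ from Lemma~\ref{lem:rec} and taking the minimum over $p \in \{0, \ldots, n-1\}$ completes the first case.

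For Case~2 ($n > n_0 + 1$), Remark~\ref{r:Mbound} yields $M_{pK + k} \leq \tau L_f$ for every $p \geq n_0$, so $B_p \leq K \tau L_f D^2$ uniformly in $p \geq n_0$. I would then apply Lemma~\ref{l:recursion} to the shifted sequences $(h_{n_0 + q})_{q \in \NN}$ and $(A_{(n_0 + q) K})_{q \in \NN}$ with $b \coloneqq K \tau L_f D^2$, whose $t \geq 2$ conclusion, with $t = n - n_0$ and $h_1 = f(\bx_{n_0 K}) - f(\bx^*)$, yields the stated denominator. The principal obstacle I expect is the careful index-bookkeeping in Case~1: one must reconcile the $K$-block threshold $n_0$ with the per-iteration threshold $t_0$ of Remark~\ref{r:Mbound}, and handle the borderline case $p = n_0$ where $\tau L_f \leq \eta^{pK} M_0$ may fail in general.
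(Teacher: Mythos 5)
Your proposal follows essentially the same route as the paper's proof: the same oracle-count argument (gradient reuse across the while-loop check plus the bound on $\tau$-multiplications from Remark~\ref{r:Mbound}), the tangent-line bound $\rho(x,b)\geq x-b/2$ combined with monotonicity for the first case, and Lemma~\ref{l:recursion} applied to the sequence shifted to start at $n_0$ with $b=K\tau L_f D^2$ for the second. The borderline term $p=n_0$ that you flag is not treated more carefully in the paper either, whose displayed derivation covers only $n\leq n_0$ and then appeals to monotonicity of $(f(\bx_t))_{t\in\NN}$.
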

\begin{proof}
We start by estimating the number of function and gradient
computations of Algorithm~\ref{a:abcg}.
Except for \(t=0\), where \(f(\bx_{0})\) and \(\nabla f(\bx_{0})\)
are computed, for all $t\geq 1$,
in the preceding iteration
$f(\bx_t)$ and $\nabla f(\bx_t)$
have already been computed.
So, in the first \(t\) iterations,
there have been $t+1$ function and gradient evaluations
for the \emph{initial} check of Line~\ref{l:while} in each iteration.
Now, let $k$ denote the total number of function
and gradient evaluations in the first \(t\) iterations, i.e., $k-t-1$
is the total number \emph{subsequent} checks of Line~\ref{l:while}
and
also the number of times that line~\ref{l:tau} has been
executed.
By Remark~\ref{r:Mbound}, unless \(k=0\), we have
\(M_{t} = \eta^t \tau^{k-t-1} M_0 < \tau L_{f}\),
therefore at most \(k \leq t+1+\max\{0, \lceil
\log_{\tau}(\eta^{-t}L_f/M_0)\rceil\}\)
function and gradient evaluations are performed.

We turn now to the convergence rate.
As in Remark~\ref{r:Mbound},
let \(t_{0}\) be the smallest nonnegative integer
with \(\eta^{t_{0}} M_{0} \leq \tau L_{f}\).
The number \(n_{0}\) is chosen to be the smallest nonnegative integer
with \(t_{0} \leq n_{0} K + 1\).
Let \(1 \leq n \leq n_{0}\).
By Remark~\ref{r:Mbound}, \(M_{(n - 1) K + 1}=\eta^{(n - 1) K +
1}M_0 > \tau L_{f}\)
and
\(M_{(n - 1) K + 1} \geq M_{t}\) for all \(t > (n - 1) K\).
By Lemma~\ref{lem:rec} and Fact~\ref{f:rho},
\begin{equation}
\label{e:421}
 \begin{split}
  f(\bx_{(n-1) K}) - f(\bx_{n K})
  &
  \geq
  \rho\left(
    f(\bx_{(n - 1) K}) - f(\bx^{*})+A_{(n-1)K},
    \sum_{k=1}^{K} M_{n K + k} D^{2}
  \right)
  \\
  &
  \geq
  \rho(f(\bx_{(n - 1) K}) - f(\bx^{*})+A_{(n-1)K},
  K \eta^{(n - 1) K + 1} M_{0} D^{2})
  \\
  &
  \geq
  f(\bx_{(n - 1) K}) - f(\bx^{*})+A_{(n-1)K}
  -
  \frac{K \eta^{(n - 1) K+1} M_{0} D^{2}}{2}
  .
 \end{split}
\end{equation}
Rearranging \eqref{e:421} shows \(f(\bx_{n K}) - f(\bx^{*})
\leq K \eta^{(n - 1) K + 1} M_{0} D^{2} / 2 -A_{(n-1)K}\).
Therefore, since $(f(\bx_t))_{t\in\NN}$ is monotonically
decreasing (Lemma~\ref{lem:rec}), the first case of \eqref{e:cv-rate}
follows.
By the choice of \(n_{0}\),
we have \(\eta^{n_{0} K + 1} M_{0} \leq \tau L_{f}\),
thus \(M_{t} \leq \tau L_{f}\) for \(t \geq n_{0} K \).
Let \(n \geq n_{0} + 1\).
Then Lemma~\ref{lem:rec} yields
\begin{equation}
 \begin{aligned}
  f(\bx_{(n-1) K}) - f(\bx_{n K})
  &
  \geq
  \rho\left(
    f(\bx_{(n - 1) K}) - f(\bx^{*}),
    \sum_{k=1}^{K} M_{(n- 1) K + k} D^{2}
  \right)
  \\
  &
  \geq
  \rho(f(\bx_{(n - 1) K}) - f(\bx^{*}), K \tau L_{f} D^{2})
  .
 \end{aligned}
\end{equation}
and the second case of \eqref{e:421} follows from 
Lemma~\ref{l:recursion}.
\end{proof}

To interpret the extra gaps $(A_t)_{t\in\NN}$ in
Theorem~\ref{t:flex}, see
Remark~\ref{r:extra}.
\begin{corollary}
  In the context of Theorem~\ref{t:flex},
  let Algorithm~\ref{a:abcg}
  use a block selection strategy
  without coordinate reactivation, i.e.,
  \(I_{n K + i} \cap I_{n K + j} = \varnothing\)
  for all \(n\) and \(1 \leq i < j \leq K\).
  Then,
for any \(0 < \varepsilon \leq K \tau L_{f} D^{2} / 2\),
the primal gap \(f(\bx_{nK}) - f(\bx^{*}) \leq \varepsilon\)
is guaranteed
after at most $m(n_0+ \frac{2K\tau L_f D^2}{\varepsilon})$ 
LMO calls
and computation of
at most
$t+1+\max\{0, \lceil
\log_{\tau}(\eta^{-t}L_f/M_0)\rceil\}$ function values and gradients.
\end{corollary}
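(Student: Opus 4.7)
The plan is to reduce the statement to case (ii) of Theorem~\ref{t:flex} by using the non-reactivation hypothesis to convert per-iteration bounds into clean per-cycle counts. I would organize the argument in three short steps.

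First, I would observe that the non-reactivation condition \(I_{nK+i} \cap I_{nK+j} = \varnothing\) for \(1 \le i < j \le K\), combined with Assumption~\ref{a:1} applied to the window \([nK+1, (n+1)K]\), forces \(\{I_{nK+1}, \dots, I_{nK+K}\}\) to be an \emph{exact} partition of \(\{1,\dots,m\}\): disjointness supplies \(\sum_{k=1}^{K} |I_{nK+k}| \le m\), while the covering clause of Assumption~\ref{a:1} provides the matching reverse inequality by demanding that every coordinate appear at least once in the window. Hence each such cycle of \(K\) consecutive iterations performs exactly \(m\) LMO calls (Line~\ref{l:lmo2} of Algorithm~\ref{a:abcg} is executed once per \(i \in I_{t}\)).

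Second, I would invoke the second case of \eqref{e:cv-rate} in Theorem~\ref{t:flex}. Because the quantities \(A_{pK}\) are nonnegative, the terms \(2A_{pK}/(f(\bx_{n_{0}})-f(\bx^{*}))\) and \((A_{pK}/(f(\bx_{n_{0}})-f(\bx^{*})))^{2}\) in the denominator may be discarded, yielding the weaker but cleaner bound \(f(\bx_{nK}) - f(\bx^{*}) \le 2K\tau L_{f} D^{2}/(n - n_{0})\) valid for \(n > n_{0}+1\). Forcing the right-hand side below \(\varepsilon\) requires \(n \ge n_{0} + 2K\tau L_{f} D^{2}/\varepsilon\); the hypothesis \(\varepsilon \le K\tau L_{f} D^{2}/2\) makes this lower bound at least \(n_{0}+4\), so we are safely in the regime where case (ii) applies.

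Combining the two steps, choosing any integer \(n \ge n_{0} + 2K\tau L_{f} D^{2}/\varepsilon\) guarantees the primal-gap inequality at iteration \(nK\) while costing at most \(nm \le m\bigl(n_{0} + 2K\tau L_{f} D^{2}/\varepsilon\bigr)\) LMO calls, which is the claimed count. The function-and-gradient evaluation bound transcribes directly from Theorem~\ref{t:flex} with \(t = nK\). The only genuinely new ingredient is the partition observation in the first step — converting the crude inequality \(|I_{t}| \le m\) per iteration into the equality \(\sum_{k=1}^{K} |I_{nK+k}| = m\) per cycle — and this is more an accounting remark than a technical obstacle, which is why the result is naturally phrased as a corollary rather than a theorem.
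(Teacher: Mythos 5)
Your argument is correct and is essentially the derivation the paper intends (the corollary is stated there without proof): drop the nonnegative $A_{pK}$ terms from the denominator in the second case of \eqref{e:cv-rate}, use $\varepsilon \leq K\tau L_f D^2/2$ to ensure $n> n_0+1$ so that case applies, and use pairwise disjointness of the blocks within each length-$K$ window (together with Assumption~\ref{a:1} for the covering direction, though only disjointness is needed for the upper bound) to count at most $m$ LMO calls per cycle, then transcribe the function/gradient count from Theorem~\ref{t:flex} with $t=nK$. The only blemish is the integer-rounding slop — writing $nm \leq m\bigl(n_0 + 2K\tau L_f D^2/\varepsilon\bigr)$ is only exact if $n$ equals that quantity rather than its ceiling, and the cycle indexing $[nK+1,(n+1)K]$ leaves iteration $0$ and a partial last cycle to account for — but the paper's own statement elides the same rounding, so this is not a substantive gap.
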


\begin{remark}
\label{r:linear}
Under stricter assumptions, one can achieve linear convergence by
following the template \cite[Section~2.2.1]{CGSurvey} from 
the penultimate inequality in the proof of Lemma~\ref{lem:rec}.
\end{remark}

\subsection{Short-steps with convex objectives}
\label{sec:convss}

In this section, we consider the step size rule \eqref{e:shortA}
of Remark~\ref{r:short-motiv}.

\begin{algorithm}[H]
\caption{Block-Coordinate Frank-Wolfe (BCFW) with Short-Steps}
\label{a:bcg}  
\begin{algorithmic}[1]
\REQUIRE Function $f\colon\CCX\to\mathbb{R}$, gradient $\nabla f$,
point $\bx_0\in\CCX$, linear minimization oracles $(\lmo_i)_{i\in
I}$
\FOR{$t=0, 1$ \textbf{to} $\dotsc$}
\STATE Choose a nonempty block $I_t\subset I$
\STATE $\bg_t\leftarrow \nabla f(\bx_t)$
\FOR{$i=1$ \textbf{to} $m$}
\IF{$i\in I_t$}
\label{l:block}
\STATE $\bv_{t}^i\leftarrow\lmo_i(\bg_t^i)$
\STATE\label{l:step}
$\gamma_t^i
 \leftarrow \min\left\{ 1, 
    \dfrac{\scal{\bg_t^i}{\bx_{t}^i - \bv_{t}^i}}{L_f
     \|\bv_{t}^i-\bx_{t}^i\|^2}
\right\}$ 
\STATE \label{l:update} $\bx_{t+1}^i \leftarrow
\bx_{t}^i+\gamma_t^i(\bv_t^i-\bx_t^i)$
\ELSE
\STATE \label{l:lazy}
$\bx_{t+1}^i\leftarrow \bx_{t}^i$
\ENDIF
\ENDFOR
\ENDFOR
\end{algorithmic}
\end{algorithm}

Short-Step BCFW (Algorithm~\ref{a:bcg}) requires an upper bound on
$L_f$. For this price, the algorithm becomes easier to
parallelize in lines~\ref{l:block}--\ref{l:update}, foregoes any
function evaluations, and requires only one
gradient evaluation per iteration. Also, both the convergence rate
and the prefactor of obtained in this section match the non-block
version ($K=1$) \cite[Theorem~2.2]{CGSurvey}.

\begin{corollary}
\label{c:sc}
Let $\CCX\subset\HHH$ be a product
of $m$ nonempty compact convex sets,
let $D$ be the diameter of $\CCX$,
let $f\colon\HHH\to\mathbb{R}$ be convex and $L_f$-smooth,
let $\bx^*$ be a solution to \eqref{e:p},
and for every nonempty $J\subset I$ let
$G_J$ be given by \eqref{e:Jgap}.
In the setting of Algorithm~\ref{a:bcg}, 
suppose that $K$ satisfies Assumption~\ref{a:1}, and
set
$A_t=\sum_{k=1}^{K-1} G_{I_{t+k-1}\cap (I_{t+k}\cup\cdots\cup
I_{t+K-1})}(\bx_{t+k})\geq 0$. Then, for every $n\in\NN$,
\begin{equation}
\label{e:sc}
  f(\bx_{nK}) - f(\bx^*)
  \leq
  \begin{cases}
    \dfrac{K L_f D^{2}}{2}
-A_{0}
    &\text{if } n=1 \\
    \dfrac{2 K L_f D^2}{n - 1+\sum_{p=1}^n
      \frac{2 A_{pK}}{f(\bx_{1}) - f(\bx^{*})}
      + \left(\frac{A_{pK}}{f(\bx_{1}) - f(\bx^{*})}\right)^2}
    &\text{if } n \geq 2
    .
  \end{cases}
\end{equation}
Furthermore Algorithm~\ref{a:bcg} requires one gradient evaluation
per iteration.
\end{corollary}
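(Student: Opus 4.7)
The gradient-evaluation count is immediate from inspecting Algorithm~\ref{a:bcg}: only one call to $\nabla f$ is needed to form $\bg_t$ in each iteration, and no line search or function evaluation is performed. So the main task is the primal-gap bound \eqref{e:sc}, which is essentially Theorem~\ref{t:flex} specialized to the constant estimate $M_{t+1}=L_f$ for every $t$. Indeed, with $M_{t+1}=L_f$ the step size in Lemma~\ref{lem:short-progress} coincides with \eqref{e:short}, since $G_i(\bx_t)=\scal{\bg_t^i}{\bx_t^i-\bv_t^i}$ by the defining property of the LMO.

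My plan is to first produce the per-iteration progress bound $f(\bx_t)-f(\bx_{t+1})\geq P_t$, where $P_t$ is the quantity defined in Lemma~\ref{lem:short-progress} with $M_{t+1}=L_f$. For this, I would apply Fact~\ref{f:smooth3} with $\bx=\bx_t$ and $\by=\bx_{t+1}$ to start the chain
\[
f(\bx_t)-f(\bx_{t+1})\geq\scal{\bg_{t+1}}{\bx_t-\bx_{t+1}}+\frac{\|\bg_t-\bg_{t+1}\|^2}{2L_f},
\]
which is exactly the first line of \eqref{e:921}. The remaining steps of \eqref{e:921} are purely algebraic identities plus the Cauchy--Schwarz-type quadratic inequality, and the fact that $\bx_t$ and $\bx_{t+1}$ agree on $I\setminus I_t$ (from Line~\ref{l:lazy}); those lines carry over verbatim with $M_{t+1}$ replaced by $L_f$, yielding $f(\bx_t)-f(\bx_{t+1})\geq P_t$.

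Next, I would sum this inequality over $k\in\{0,\ldots,K-1\}$ and invoke Lemma~\ref{lem:short-progress}\ref{l:block-progressa} together with Assumption~\ref{a:1} (so that $I_t\cup\cdots\cup I_{t+K-1}=I$), which gives
\[
f(\bx_t)-f(\bx_{t+K})\geq\rho\bigl(G_I(\bx_t)+A_t,\;KL_fD^2\bigr).
\]
Convexity of $f$ and optimality of the LMO yield $G_I(\bx_t)\geq\scal{\nabla f(\bx_t)}{\bx_t-\bx^*}\geq f(\bx_t)-f(\bx^*)$, and monotonicity of $\rho$ in its first argument (Fact~\ref{f:rho}) then upgrades the bound to
\[
f(\bx_t)-f(\bx_{t+K})\geq\rho\bigl(f(\bx_t)-f(\bx^*)+A_t,\;KL_fD^2\bigr).
\]

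Finally, to convert this single recursion into the explicit rate \eqref{e:sc}, I would set $h_n\coloneqq f(\bx_{nK})-f(\bx^*)$, $a_n\coloneqq A_{nK}$, $b\coloneqq KL_fD^2$ and feed these into Lemma~\ref{l:recursion}. The two cases of the lemma map exactly onto the two cases of \eqref{e:sc}. There is no real obstacle here beyond bookkeeping: the short-step version is actually cleaner than the adaptive case because we do not need to track $M_t$ across a burn-in phase (no $n_0$ is needed, since the ``estimate'' $L_f$ is already correct from iteration one), so the indexing in Lemma~\ref{l:recursion} applies with the single threshold between $n=1$ and $n\geq 2$ visible in \eqref{e:sc}.
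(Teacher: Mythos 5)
Your argument is correct, but it takes a different route from the paper. The paper proves Corollary~\ref{c:sc} by a reduction: it observes that Algorithm~\ref{a:bcg} generates exactly the iterates of the adaptive Algorithm~\ref{a:abcg} initialized with $M_0=L_f$ and $\eta=1$ (Fact~\ref{f:smooth3} guarantees the backtracking test in Line~\ref{l:while} never triggers), invokes Theorem~\ref{t:flex} with $n_0=0$ for every $\tau>1$, and then lets $\tau\searrow 1$ to remove the factor $\tau$ from the constant. You instead bypass the adaptive machinery entirely and redo the progress bound directly with the constant estimate $M_{t+1}\equiv L_f$: Fact~\ref{f:smooth3} gives the first inequality of \eqref{e:921}, the remaining algebra of Lemma~\ref{lem:rec} is $M$-independent, Lemma~\ref{lem:short-progress}\ref{l:block-progressa} with Assumption~\ref{a:1} and convexity yields $f(\bx_t)-f(\bx_{t+K})\geq\rho\bigl(f(\bx_t)-f(\bx^*)+A_t,\,KL_fD^2\bigr)$, and Lemma~\ref{l:recursion} converts this into the rate. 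In substance this is the proof of Lemma~\ref{lem:rec} and Theorem~\ref{t:flex} specialized to a constant smoothness estimate, but as a proof of the corollary it is genuinely more direct: it avoids the $\tau\searrow 1$ limiting argument (needed in the paper because the adaptive algorithm requires $\tau>1$) and the burn-in index $n_0$, delivering the clean constant $L_f$ immediately; the price is that you re-derive rather than reuse the adaptive theorem, so the corollary no longer reads as a one-line consequence of Theorem~\ref{t:flex}. One caveat applies equally to your write-up and to the paper: the final translation of Lemma~\ref{l:recursion} into the displayed denominator of \eqref{e:sc} is loose on bookkeeping (the lemma's sum runs to $n-1$ with the reference gap $f(\bx_K)-f(\bx^*)$ and carries an extra $2b/h_1$ term, whereas \eqref{e:sc} sums to $n$ and normalizes by $f(\bx_1)-f(\bx^*)$); since the paper's own derivation of Theorem~\ref{t:flex} and of the corollary exhibits the same index slack, your claim that the two cases ``map exactly'' inherits, rather than introduces, this imprecision.
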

\begin{proof}
This follows from the fact that Algorithm~\ref{a:bcg} produces
the same sequence of iterates as Algorithm~\ref{a:abcg}: by
initializing Algorithm~\ref{a:abcg} with $M_0=L_f$ and 
$\eta=1$, as by Fact~\ref{f:smooth3}, the
condition in Line~\ref{l:while} of Algorithm~\ref{a:abcg}
is always true.
Hence,
this case of Algorithm~\ref{a:abcg} coincides with Algorithm \ref{a:bcg} and
we achieve convergence from Theorem~\ref{t:flex} for all $\tau>1$;
taking the limit as $\tau\searrow 1$ yields \eqref{e:sc}.
Clearly, Algorithm~\ref{a:bcg} requires one gradient evaluation per
iteration.
\end{proof}

\section{Nonconvex objective functions}
\label{sec:ss}

In this section, we consider
Algorithm~\ref{a:bcg} under Assumption~\ref{a:1} on
nonconvex objective functions with $L_f$-Lipschitz continuous
gradients. Since \eqref{eq:smooth3} only holds for
smooth and convex functions, a different progress lemma which
relies on the traditional smoothness inequality \eqref{e:2} is
derived. 
We begin with a blockwise descent lemma.

\begin{lemma}
\label{l:d}
Let $\CCX\subset\HHH$ be a product of $m$ nonempty compact
convex sets and let $f\colon\HHH\to\mathbb{R}$ be $L_f$-smooth on
$\CCX$.
In the setting of Algorithm~\ref{a:bcg}, $(f(\bx_t))_{t\in\NN}$ is
monotonically decreasing, and
  \begin{equation}
    \label{eq:short-gap}
(\forall t\in\NN)\quad
    f(\bx_{t}) - f(\bx_{t+1})
    \geq
    \frac{\scal{ \nabla f(\bx_{t})}{\bx_{t} - \bx_{t+1}}}{2}
    \geq \frac{L_{f} \|\bx_{t} - \bx_{t+1}\|^{2}}{2}.
  \end{equation}
\end{lemma}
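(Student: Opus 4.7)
The plan is to reduce \eqref{eq:short-gap} to a componentwise inequality on $I_t$ and then reassemble. Starting from the $L_f$-smoothness bound \eqref{e:2} applied between $\bx_t$ and $\bx_{t+1}$ yields
\begin{equation*}
f(\bx_{t}) - f(\bx_{t+1}) \geq \scal{\nabla f(\bx_{t})}{\bx_{t} - \bx_{t+1}} - \frac{L_f}{2}\|\bx_{t} - \bx_{t+1}\|^{2}.
\end{equation*}
So it suffices to establish the second inequality in \eqref{eq:short-gap}, namely $\scal{\nabla f(\bx_t)}{\bx_t - \bx_{t+1}} \geq L_f \|\bx_t - \bx_{t+1}\|^2$: substituting it into the smoothness bound absorbs half of the linear term and produces the first inequality at once.

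The second inequality reduces to a componentwise claim, since the summands for $i \notin I_t$ vanish (both sides equal zero by Line~\ref{l:lazy}). For $i \in I_t$, the short step of Line~\ref{l:step} is precisely the minimizer on $[0,1]$ of the univariate quadratic $\gamma \mapsto -\gamma G_i(\bx_t) + (\gamma^{2}/2) L_f \|\bv_t^i - \bx_t^i\|^{2}$, and its first-order optimality condition gives $\gamma_t^i L_f \|\bv_t^i - \bx_t^i\|^2 \leq G_i(\bx_t)$. Multiplying through by $\gamma_t^i \geq 0$, and using the update formula $\bx_{t+1}^i - \bx_t^i = \gamma_t^i(\bv_t^i - \bx_t^i)$ together with $G_i(\bx_t) = \scal{\bg_t^i}{\bx_t^i - \bv_t^i}$, yields $L_f \|\bx_t^i - \bx_{t+1}^i\|^2 \leq \scal{\bg_t^i}{\bx_t^i - \bx_{t+1}^i}$. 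Summing over $i$ completes the argument. (Alternatively, splitting into the two cases $\gamma_t^i = 1$ and $\gamma_t^i < 1$ gives the same bound: in the clipped case the defining condition of $\gamma_t^i = 1$ is exactly $G_i(\bx_t) \geq L_f \|\bv_t^i - \bx_t^i\|^{2}$, while in the interior case the inequality holds with equality.)

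Finally, monotonic decrease of $(f(\bx_t))_{t\in\NN}$ is immediate because the rightmost expression in \eqref{eq:short-gap} is nonnegative. I do not anticipate any real obstacle; the only preliminary step is a short induction ensuring $\bx_t \in \CCX$ for all $t$ so that the smoothness inequality can be applied throughout, which follows because each iterate is a convex combination of $\bx_t^i$ and the vertex $\bv_t^i \in C_i$ with weight $\gamma_t^i \in [0,1]$.
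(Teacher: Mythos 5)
Your proof is correct and follows essentially the same route as the paper: the componentwise consequence of the short step, $\gamma_t^i L_f\|\bv_t^i-\bx_t^i\|^2\leq G_i(\bx_t)$, is multiplied by $\gamma_t^i$ and summed (using Line~\ref{l:lazy} for the inactive blocks) to get $\scal{\nabla f(\bx_t)}{\bx_t-\bx_{t+1}}\geq L_f\|\bx_t-\bx_{t+1}\|^2$, which is then combined with the smoothness inequality \eqref{e:2} exactly as in the paper's proof. The added remarks on the case split $\gamma_t^i=1$ versus $\gamma_t^i<1$ and on feasibility of the iterates are consistent with, and do not change, that argument.
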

\begin{proof}
By \eqref{e:shortA},
for every \(i \in I_{t}\), $
L_f\|\bx_t^i-\bv_t^i\|\gamma_t^i\leq G_i(\bx_t)
$, so
\begin{equation}
\label{e:44}
  \scal{\nabla f(\bx_{t})}{\bx_{t}^{i} - \bx_{t+1}^{i}}
  =
  \gamma_{t}^{i} G_{i}(\bx_{t})
  \geq
  (\gamma_{t}^{i})^{2} L_{f}  \|\bv_{t}^{i}-\bx_{t}^{i}\|^{2}
  =
  L_{f}  \|\bx_{t}^{i} - \bx_{t+1}^{i}\|^{2}
  .
\end{equation}
Summing \eqref{e:44} for all \(i \in I_{t}\)
and using $\bx_{t}^{i}=\bx_{t+1}^{i}$ for $i \notin I_{t}$
(Line~\ref{l:lazy}),
we obtain
\begin{equation}
  \label{eq:2}
  \scal{ \nabla f(\bx_{t})}{\bx_{t} - \bx_{t+1}}
  \geq
  L_{f}  \|\bx_{t} - \bx_{t+1}\|^{2}
  .
\end{equation}
We combine this with the smoothness inequality \eqref{e:2} to
derive the claim:
\begin{equation}
 \begin{split}
  f(\bx_{t}) - f(\bx_{t+1})
  &
  \geq
  \scal{\nabla f(\bx_{t})}{\bx_{t}-\bx_{t+1}}
  - \frac{L_{f}}{2}\|\bx_{t} - \bx_{t+1}\|^{2}
  \\
  &
  \geq
  \frac{\scal{\nabla f(\bx_{t})}{\bx_{t}-\bx_{t+1}}}{2}
  \\
  &
  \geq
  \frac{L_{f}}{2}\|\bx_{t} - \bx_{t+1}\|^{2}
  .
 \end{split}
\end{equation}
\end{proof}

\begin{lemma}[Progress bound via smoothness \eqref{e:2}]
  \label{lem:batch-progress}
Let $\CCX\subset\HHH$ be a product of $m$ nonempty compact
convex sets,
let $D$ be the diameter of $\CCX$,
let $f\colon\CCX\to\mathbb{R}$ be a function with
$L_f$-Lipschitz continuous gradient \(\nabla f\) on $\CCX$,
let $G_I$ be given by \eqref{e:fwgap}, 
and let $t\in\NN$.
In the setting of Algorithm~\ref{a:bcg}, suppose that
$K$ satisfies
Assumption~\ref{a:1}, and set
\newline
$A_{t}=\sum_{k=0}^{K-1}G_{(I_{t+k}\cup\cdots\cup I_{t+K-1})\cap
I_{t+k-1}}(\bx_{t+k})\geq 0$. Then
  \begin{equation}
    \label{eq:batch-progress}
    f(\bx_{t}) - f(\bx_{t+K})
    \geq
    \frac{\rho(G_{I}(\bx_{t}) + A_{t}, K L_{f} D^{2})}{2}
    .
  \end{equation}
\end{lemma}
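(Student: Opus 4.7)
The plan is to deduce Lemma~\ref{lem:batch-progress} from Lemma~\ref{lem:short-progress}(ii). Algorithm~\ref{a:bcg} is the instance of the generic Algorithm~\ref{a:bcgg} with constant smoothness estimate $M_{t+1}=L_f$ and the short-step rule matching the one required in Lemma~\ref{lem:short-progress}, so the latter applies. Under Assumption~\ref{a:1}, $I_t \cup \dots \cup I_{t+K-1} = I$, and therefore Lemma~\ref{lem:short-progress}(ii) yields the lower bound $\sum_{k=0}^{K-1} P_{t+k} \geq \rho(G_I(\bx_t) + A_t, K L_f D^2)$, where $P_s = \scal{\bg_s}{\bx_s - \bx_{s+1}} - \tfrac{L_f\|\bx_s - \bx_{s+1}\|^2}{2} + \tfrac{\|\bg_s^{I\setminus I_s} - \bg_{s+1}^{I\setminus I_s}\|^2}{2L_f}$. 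It therefore suffices to establish the reverse upper bound $\sum_{k=0}^{K-1} P_{t+k} \leq 2(f(\bx_t) - f(\bx_{t+K}))$; dividing the resulting sandwich by $2$ would then give~\eqref{eq:batch-progress}.

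I would prove $P_s \leq 2(f(\bx_s) - f(\bx_{s+1}))$ for each $s$ by bounding the summands of $P_s$ separately. First, the smoothness inequality~\eqref{e:2} applied between $\bx_s$ and $\bx_{s+1}$ directly yields $\scal{\bg_s}{\bx_s-\bx_{s+1}} - \tfrac{L_f\|\bx_s - \bx_{s+1}\|^2}{2} \leq f(\bx_s) - f(\bx_{s+1})$. Second, $L_f$-Lipschitz continuity of $\nabla f$ together with the norm restriction give $\|\bg_s^{I\setminus I_s} - \bg_{s+1}^{I\setminus I_s}\| \leq \|\bg_s - \bg_{s+1}\| \leq L_f \|\bx_s - \bx_{s+1}\|$, so $\tfrac{\|\bg_s^{I\setminus I_s} - \bg_{s+1}^{I\setminus I_s}\|^2}{2L_f} \leq \tfrac{L_f\|\bx_s - \bx_{s+1}\|^2}{2}$, and by the second inequality of Lemma~\ref{l:d} the right side is also at most $f(\bx_s) - f(\bx_{s+1})$. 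Adding the two estimates and telescoping across $s = t, \ldots, t+K-1$ produces the required upper bound.

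The key asymmetry with the convex analysis is that here one cannot absorb the cross-term $\tfrac{\|\bg_s^{I\setminus I_s} - \bg_{s+1}^{I\setminus I_s}\|^2}{2L_f}$ into primal progress at no cost via the convex-smoothness identity~\eqref{eq:smooth3}---the tool that powered Lemma~\ref{lem:rec}. Instead, one must spend an extra factor of $2$ by invoking Lemma~\ref{l:d}'s descent estimate $\tfrac{L_f}{2}\|\bx_s - \bx_{s+1}\|^2 \leq f(\bx_s) - f(\bx_{s+1})$ in tandem with Lipschitz continuity, and this factor is precisely the prefactor $1/2$ in~\eqref{eq:batch-progress} that does not appear in the analogous convex result of Lemma~\ref{lem:rec}.
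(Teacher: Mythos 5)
Your proposal is correct and follows essentially the same route as the paper: both combine the smoothness inequality \eqref{e:2} with Lemma~\ref{l:d} plus Lipschitz continuity of $\nabla f$ to obtain $2\bigl(f(\bx_s)-f(\bx_{s+1})\bigr)\geq P_s$ with $M_{s+1}=L_f$, then sum over $s=t,\dots,t+K-1$, invoke Lemma~\ref{lem:short-progress}\ref{l:block-progressa} under Assumption~\ref{a:1}, and divide by $2$. Your closing remark about the origin of the prefactor $1/2$ matches the paper's reasoning as well.
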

\begin{proof}
For any iteration \(t\), we have by smoothness \eqref{e:2}
\begin{equation}
  \label{eq:5}
  f(\bx_{t}) - f(\bx_{t+1})
  \geq
  \scal{\bg_{t}}{\bx_{t} - \bx_{t+1}}
  - \frac{L_{f} \|\bx_{t} - \bx_{t+1}\|^{2}}{2}
  .
\end{equation}
By Lemma~\ref{l:d} and Lipschitz continuity of gradient, 
we also have
\begin{equation}
  \label{eq:6}
  f(\bx_{t}) - f(\bx_{t+1})
       \geq \frac{L_{f} \|\bx_{t} - \bx_{t+1}\|^{2}}{2}
       \geq \frac{\|\bg_{t} - \bg_{t+1}\|^{2}}{2L_f}
       \geq \frac{\|\bg_{t}^{I\setminus I_t} -
\bg_{t+1}^{I\setminus I_t}\|^{2}}{2L_f}
  .
\end{equation}
The sum of \eqref{eq:5} and \eqref{eq:6} is
\begin{equation}
\label{e:821}
  2 (f(\bx_{t}) - f(\bx_{t+1}))
  \geq
  \scal{\bg_{t}}{\bx_{t} - \bx_{t+1}}
  - \frac{L_{f} \|\bx_{t} - \bx_{t+1}\|^{2}}{2}
+
\frac{\|\bg_{t}^{I\setminus I_t} -
\bg_{t+1}^{I\setminus I_t}\|^{2}}{2L_f}
  .
\end{equation}
Summing
\eqref{e:821} from $t$ to $t+K-1$, 
invoking
Lemma~\ref{lem:short-progress}\ref{l:block-progressa}, then
dividing by $2$ yields \eqref{eq:batch-progress}.
\end{proof}

We are ready to provide convergence for nonconvex functions.
Due to lack of optimality guarantees for nonconvex functions,
a typical result for Frank-Wolfe algorithms states that the algorithm
produces a point with arbitrarily small F-W gap
\cite{Pedr20,CGSurvey},
this is closely related to stationarity.

\begin{theorem}[Nonconvex convergence]
\label{t:ncv-conv}
Let $\CCX\subset\HHH$ be a product of $m$ nonempty compact
convex sets with diameter $D$. Let $f\colon\HHH\to\mathbb{R}$ 
be such that $\nabla f$ is $L_f$-Lipschitz continuous on
$\CCX$. Let $G_I$ be given by
\eqref{e:fwgap}. In the setting of
Algorithm~\ref{a:bcg}, suppose that $K$ satisfies
Assumption~\ref{a:1},
set $H_0=f(\bx_0)-\inf_{\bx\in\CCX}f(\bx)$, and for every
$n\in\NN$ set
\begin{equation}
A_n=\sum_{k=1}^{K-1} G_{I_{n+k-1}\cap (I_{n+k}\cup\cdots\cup
I_{n+K-1})}(\bx_{n+k})\geq 0.
\end{equation}
Then, for every $n\in\NN\setminus\{0\}$, 
\begin{equation}
\label{e:ncv-conv2}
\begin{aligned}
\min_{0\leq p\leq n-1}G_{I}(\bx_{pK})
&
\leq
\frac{1}{n}\sum_{p=0}^{n-1}G_{I}(\bx_{pK})
\\
&
\leq
\begin{cases}
  \frac{2 H_0 - \sum_{p=0}^{n-1}A_{pK}}{n} + \frac{KL_fD^2}{2}
  &\text{if } n \leq \frac{2 H_0}{KL_fD^2}\\
  2 D\sqrt{\frac{H_0 K L_f}{n}}
  - \frac{\sum_{p=0}^{n-1}A_{pK}}{n}
  &\text{otherwise.}
\end{cases}
\end{aligned}
\end{equation}
In consequence, there exists a subsequence $(n_k)_{k\in\NN}$ such
that $G_I(\bx_{n_kK})\to 0$, and every accumulation point
of $(\bx_{n_kK})_{k\in\NN}$ is a stationary point of 
\eqref{e:p}.
\end{theorem}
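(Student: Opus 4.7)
The plan is to telescope the per-batch descent lemma (Lemma~\ref{lem:batch-progress}) across $n$ windows of $K$ iterations, invert $\rho$ to extract a rate on the average Frank--Wolfe gap, and then pass to a convergent subsequence via compactness.

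First, I would apply Lemma~\ref{lem:batch-progress} with $t=pK$ for $p=0,1,\ldots,n-1$ and sum. The left-hand side telescopes to $f(\bx_{0})-f(\bx_{nK})\leq H_{0}$, producing the core estimate
\begin{equation*}
\sum_{p=0}^{n-1}\rho\bigl(G_{I}(\bx_{pK})+A_{pK},\,KL_{f}D^{2}\bigr)\leq 2H_{0}.
\end{equation*}
Next, I would apply the subadditivity of $\rho$ from Fact~\ref{f:rho} (equation~\eqref{eq:1}) to collapse this into a single evaluation,
\begin{equation*}
\rho\Bigl(\textstyle\sum_{p=0}^{n-1}G_{I}(\bx_{pK})+\sum_{p=0}^{n-1}A_{pK},\,nKL_{f}D^{2}\Bigr)\leq 2H_{0}.
\end{equation*}

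The next step is to invert $\rho$ in its first argument, using the two branches of the Huber-type formula \eqref{e:ph}. On the ``linear'' branch, the tangent inequality $\rho(x,b)\geq x-b/2$ (Fact~\ref{f:rho}) gives $\sum_{p}G_{I}(\bx_{pK})+\sum_{p}A_{pK}\leq 2H_{0}+nKL_{f}D^{2}/2$; dividing by $n$ and rearranging yields the first case of \eqref{e:ncv-conv2}. On the ``quadratic'' branch (which is operative when the summed gaps do not exceed $nKL_{f}D^{2}$, i.e., in the regime flagged by $n>2H_{0}/(KL_{f}D^{2})$), the identity $\rho(x,b)=x^{2}/(2b)$ for $x\leq b$ gives $\sum_{p}G_{I}(\bx_{pK})+\sum_{p}A_{pK}\leq 2D\sqrt{nH_{0}KL_{f}}$, from which dividing by $n$ produces the second case. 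In both cases, I would conclude by the obvious chain $\min_{p}G_{I}(\bx_{pK})\leq \tfrac{1}{n}\sum_{p}G_{I}(\bx_{pK})$.

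Finally, for the subsequential convergence statement: since the partial sums $\sum_{p}\rho(G_{I}(\bx_{pK}),KL_{f}D^{2})$ are bounded above (as $A_{pK}\geq 0$ and $\rho(\cdot,b)$ is monotone), these terms must go to zero, and strict monotonicity of $\rho(\cdot,KL_{f}D^{2})$ on $\mathbb{R}_{\geq 0}$ forces $G_{I}(\bx_{n_{k}K})\to 0$ along some subsequence. Compactness of $\CCX$ supplies a further subsequence converging to some $\bar\bx\in\CCX$, and continuity of $G_{I}$---viewed via \eqref{e:fwgap} as the supremum of continuous linear functions of $\bx$ over the compact set $\CCX$---yields $G_{I}(\bar\bx)\leq 0$, so $\bar\bx$ is stationary by \eqref{e:opt}.

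The principal subtlety will be the case split when inverting $\rho$: the linear-branch bound is universally valid, while the $\mathcal{O}(1/\sqrt{n})$ bound requires certifying that the relevant argument lies below $nKL_{f}D^{2}$ so that $\rho$ can be inverted via its quadratic piece; this threshold condition drives the dichotomy $n\lesseqgtr 2H_{0}/(KL_{f}D^{2})$ in the statement, and then the bookkeeping of moving the $\sum_{p}A_{pK}$ terms from the argument of $\rho^{-1}$ to the right-hand side of the final inequality is straightforward but must be done carefully in both cases.
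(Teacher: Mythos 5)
Your proposal is correct and follows essentially the same route as the paper's proof: telescoping Lemma~\ref{lem:batch-progress} over windows of length $K$, collapsing the sum via subadditivity of $\rho$, inverting the Huber-type function with the same dichotomy in $n$ (your certification of the quadratic branch via the threshold $n>2H_0/(KL_fD^2)$ mirrors the paper's own case split), and dividing by $n$ before using $\min\leq$ average. The subsequence/stationarity argument you spell out is left implicit in the paper, and your compactness-plus-(lower semi)continuity sketch of it is sound.
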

\begin{proof}
By telescoping the result of Lemma~\ref{lem:batch-progress} over
multiples of $K$, then using subadditivity \eqref{eq:1},
\begin{equation*}
\begin{split}
2 H_0\geq 2(f(x_0)-f(x_{nK})) &\geq
\sum_{p=0}^{n-1}\rho(G_I(\bx_{pK})+A_{pK}, KL_fD^2)
\\
&
\geq \rho\left(\sum_{p=0}^{n-1}G_I(\bx_{pK})+A_{pK}, nKL_fD^2\right).
\end{split}
\end{equation*}
Observe that, for $x,y\geq 0$ and $b>0$, we have
for $y\geq \frac{b}{2}$
by strict monotonicity of \(\rho\) that
$y\geq\rho(x,b)$ if and only if 
$x\leq y+\frac{b}{2}$.
For $y\leq \frac{b}{2}$
we have $y\geq\rho(x,b)$ if and only if
$x \leq \sqrt{2by}$.
Therefore,
\begin{equation}
\label{e:49}
\sum_{p=0}^{n-1}G_I(\bx_{pK})+A_{pK}\leq
\begin{cases}
2 H_0 + \frac{nKL_fD^2}{2} &\text{if } 2 H_0\geq nKL_fD^2\\
2 D\sqrt{H_0 n K L_f} &\text{otherwise.}
\end{cases}
\end{equation}
Dividing \eqref{e:49} by $n$ and rearranging
yields \eqref{e:ncv-conv2}.
\end{proof}

\begin{remark}
\label{r:smooth}
It is possible to still achieve $\mathcal{O}(K/t)$ convergence, by
replacing Line~\ref{l:while} in Algorithm~\ref{a:abcg} with an
analogue of \eqref{e:2} (as in \cite{Pedr20}); the proof proceeds
by a similar argument to that of Theorem~\ref{t:flex},
replacing Lemma~\ref{lem:rec} with Lemmas \ref{l:d} and
\ref{lem:batch-progress}, and
yielding the same rate with a worse constant.
This may be useful in situations where checking \eqref{e:2} is
preferable to \eqref{eq:smooth3} (e.g., Section~\ref{sec:ex3}), or
if $f$ is convex and $\nabla f$ is Lipschitz-continuous on $\CCX$,
yet $f$ is not extendable to a smooth function on $\HHH$ (see
Fact~\ref{f:smooth3}).
\end{remark}

\section{Numerical Experiments}
\label{sec:num}

In this section we examine different block selection strategies
covered by Assumption~\ref{a:1}; Sections~\ref{sec:ex1} and
\ref{sec:ex2} contain simple experiments where
the LMO for the last constraint is far more expensive than the
other LMOs, while Section~\ref{sec:ex3} uses BCFW to solve a structural SVM training
problem \cite{Laco13}. In line with Theorems~\ref{t:flex} and
\ref{t:ncv-conv}, our optimality criterion is the primal gap for
the convex problems in Sections~\ref{sec:ex1} and \ref{sec:ex3},
and minimal F-W gap for the nonconvex problem in
Section~\ref{sec:ex2}. 
Computations were performed on a single Slurm node with
3~GB RAM and no concurrencies,
allocated on an Intel Xeon Gold~6338 machine with 2.0~GHz
CPU speed, running Linux. For each experiment, we averaged the results over 20 trials.

In Sections~\ref{sec:ex1} and \ref{sec:ex2}, we ran $10,000$
iterations of BCFW using the package \texttt{FrankWolfe.jl}
(v0.3.3) \cite{Besa22} in Julia~1.11.5.
To obtain feasible initial iterates $\bx_0$,
we evaluate the LMO for each component in directions with $\mathcal{N}(0,1)$-distributed entries.
Within both experiments,
we vary how the blocks $(I_t)_{t\in\NN}$ are
selected. We compare block selection strategies newly allowed by
Assumption~\ref{a:1} to the following techniques
(e.g., in \cite{Patr98,Beck15}) covered by our results:
\begin{enumerate}
\item 
\label{act:1}
\emph{Full} activation: $I_t = I$.
\item
\label{act:2}
\emph{Cyclic} activation: $I_t= \{t\}\pmod m$.
\item
\label{act:3}
\emph{P-Cyclic} activation: $I_t=\sigma_{\lfloor t/m\rfloor}( t\pmod
m),$ where for every cycle over $m$ iterations, $\sigma_{\lfloor
t/m\rfloor}$ is a uniformly random permutation of $\{1,\ldots,m\}$.
\item
\label{act:4}
Lazy essentially cyclic activation, 
\emph{E-Cyclic}: $I_t=\{\mathfrak{i}(t)\}$ satisfying
\eqref{e:ecyclic}, (which requires $K\geq m$); each sequence of $K$
components is created by randomly shuffling tuples of
$\{1,\ldots,m-1\}$, with the final activation being the expensive
$m$th component.
\end{enumerate} 
Among these selection methods, only E-cyclic allows for the user to
leverage a-priori knowledge of LMO runtimes.

In addition to plotting optimality criterion against iterations
and time, we also include plots against the number of calls to the
most computationally-intensive LMO (spectrahedron LMO for
Section~\ref{sec:ex1}; nuclear norm ball LMO for
Section~\ref{sec:ex2}). The expensive-LMO count is a more
reproducible proxy for time in both experiments, since
it correlated with time used for all algorithms, and it was the
dominant cost of even a full F-W iteration. 
These plots are unfair to the
full/cyclic selection schemes, since they are forced to activate
the most expensive LMO at a fixed rate and our new methods have
more flexibility to re-activate cheaper components; however,
flexibility in activation is precisely the point here, and until
this work it was unclear if such reactivations would
provide progress for BCFW at all.

In Section~\ref{sec:ex3}, we compare the original algorithm of ~\cite{Laco13} to Algorithm~\ref{a:abcg} in MATLAB R2023b using the same initial iterates.
For block selection strategies, we consider a generalized P-Cyclic activation with blocks of size $n$:
\[I_t = \{\sigma_{\lfloor j/m\rfloor}( j\bmod m) \mid j \in \{(t-1)n +1, \dots, tn\} \},\]
where each $\sigma$ is a random permutation of $\{1,\ldots,m\}$.
These activations satisfy Assumption~\ref{a:1} for $K = 2 \lceil n/m \rceil$.
The algorithm in~\cite{Laco13} uses a P-Cyclic activation with
singleton blocks as well as a uniform activation, i.e.,
$I_t = \{\sigma_t (t\pmod m)\}$, which does not satisfy
Assumption~\ref{a:1}.
To ensure a fair comparison of the orginal algorithm and Algorithm~\ref{a:abcg}, we run both algorithms for $150$ epochs,
i.e., $150$ LMO calls for each component.
This yields different iteration counts depending on the chosen block sizes.
We plot against number of iterations and epochs as well as wall-clock time.

The code used to produce the result in Section~\ref{sec:ex1} and
\ref{sec:ex2} can be found at
\url{https://github.com/zevwoodstock/BlockFW}.
The code used to produce the results in Section~\ref{sec:ex3} can be found at
\url{https://github.com/JannisHal/BCFWstruct}.
The new block updating schemes and Algorithm~\ref{a:abcg} are available in
\url{https://github.com/ZIB-IOL/FrankWolfe.jl}
as \texttt{LazyUpdate} and \texttt{adaptive\_block\_coordinate\_frank\_wolfe}.

\subsection{Experiment 1: Intersection problem}
\label{sec:ex1}
The goal is to find a matrix $x\in\mathbb{R}^{s\times s}$ in
the intersection of the hypercube $C_1=[-1,1/s]^{s\times s}$ and
the spectraplex $C_2=\{x\in\mathbb{R}^{s\times s} \mid x\succeq
0, \operatorname{Trace}(x)=1\}$
for various values of $s$.
The convex sets are selected to have a thin intersection,
and hence the minimal value of
\begin{equation}
\label{e:n1}
\minimize_{\bx\in C_1\times
C_2}\frac{1}{2}\|\bx^1-\bx^2\|^2
\end{equation}
is zero. Problem \eqref{e:n1} is convex, with smoothness constant
$L_f = 2$. In this
problem, the spectrahedral linear minimization oracle, $\lmo_2$, is
far more expensive than
$\lmo_1$. So, for this experiment we compare the traditional BCFW
activations \ref{act:1}--\ref{act:3} with the following
``$q$-lazy'' scheme which is newly allowed for BCFW by
Assumption~\ref{a:1} (with $K=q$) and has
improved computational performance in proximal algorithms
\cite{Comb22}:
\begin{equation}
\label{e:lazy}
(\forall t\in\NN)\quad I_t=\begin{cases}
\{1,2\} &\text{if}\;\;t\equiv 0 \;\;\text{mod} \;\; q;\\
\{1\} &\text{otherwise}.
\end{cases}
\end{equation}
We run $20$
instances of this problem on random
initializations, and the averaged results are shown in
Figure~\ref{fig:ex1-shortstep} for short-step (Algorithm~\ref{a:bcg}).
Even though using
$q$-lazy activation is computationally cheaper on average, the
per-iteration progress is still competitive with that of full,
cyclic, P-cyclic, and E-cyclic activation. However, since
they compute
$\lmo_2$ at a much lower rate, these activation strategies also
have a faster per-iteration computation time. The activation scheme
\eqref{e:lazy} performs similarly to an E-cyclic rule with a
similar refresh rate; this may be due to the small number of
components in this problem.

In Figure~\ref{fig:ex1-comparison} we compare BCFW with short-step to the adaptive variant
(Algorithm~\ref{a:abcg}).
The results indicate that, at least on problems
like~\eqref{e:n1}, i.e.,
with a small number of components and where $L_f$ is known,
short-step may be preferable.
However, some updating strategies (e.g., full and cyclic) may exhibit performance
improvements, while others exhibit relatively little improvement
change. See Section~\ref{sec:ex3} for a better use-case for
Algorithm~\ref{a:abcg}.

\begin{figure}[htb]
\pgfplotslegendfromname{legend_conv_ex1}
\begin{center}
\subfloat[][$s=100$]{
\centering
\begin{tikzpicture}[scale=1.0]
\begin{axis}[height=4.2cm,width=4.1cm, legend cell align={left},
ylabel style={yshift=-0.75em,font=\scriptsize}, 
xlabel style={yshift=0.50em,font=\scriptsize},
tick label style={font=\tiny},
minor tick num=0,
legend columns=2,
%legend entries={Full,$5$-Lazy,Cyclic,$10$-Lazy,P-Cyclic,$20$-Lazy},
%legend style={
%    font=\tiny,
%    at={(-0,1.4)},
%    anchor=north west,
%    },
xlabel=Time (sec), 
ylabel=$f(x)-f(x^*)$,
grid,
xmin =0, xmax=35, ymin=5e-3, ymax=1e1, ymode=log, mark repeat=18]
\addplot+[thick, mark=triangle, mark repeat=15, color=cb-burgundy] table[x={time}, y={primal}] {BCFW_A20_100_full-\dateconvex.txt};
\addplot+[thick, mark=oplus, color=cb-brown]
table[x={time}, y={primal}] {BCFW_A20_100_custom5-\dateconvex.txt};
\addplot+[thick, mark=square, color=cb-blue] table[x={time}, y={primal}] {BCFW_A20_100_cyclic-\dateconvex.txt};
\addplot+[thick, mark=star, color=cb-lilac]
table[x={time}, y={primal}] {BCFW_A20_100_custom10-\dateconvex.txt};
\addplot+[thick, mark=o, color=cb-green-sea] table[x={time}, y={primal}] {BCFW_A20_100_stoc-\dateconvex.txt};
\addplot+[thick,solid, mark=otimes, mark repeat=30, color=cb-blue-light]
table[x={time}, y={primal}] {BCFW_A20_100_ecyc20-\dateconvex.txt};
\addplot+[thick,solid, mark=diamond, color=cb-salmon-pink]
table[x={time}, y={primal}] {BCFW_A20_100_custom20-\dateconvex.txt};
\end{axis}
\end{tikzpicture}
}
\subfloat[][$s=300$]{
\centering
\begin{tikzpicture}[scale=1.0]
\begin{axis}[height=4.2cm,width=4.1cm, legend cell align={left},
ylabel style={yshift=-0.75em,font=\scriptsize}, 
xlabel style={yshift=0.50em,font=\scriptsize},
tick label style={font=\tiny},
xlabel=Time (sec),
grid,
xmin =0, xmax=320, ymin=1e-2, ymax=1e2, ymode=log, mark repeat=18]
\addplot+[thick, mark=triangle, mark repeat=15, color=cb-burgundy] table[x={time}, y={primal}] {BCFW_A20_300_full-\dateconvex.txt};
\addplot+[thick, mark=oplus, color=cb-brown]
table[x={time}, y={primal}] {BCFW_A20_300_custom5-\dateconvex.txt};
\addplot+[thick, mark=square, color=cb-blue] table[x={time}, y={primal}] {BCFW_A20_300_cyclic-\dateconvex.txt};
\addplot+[thick, mark=star, color=cb-lilac]
table[x={time}, y={primal}] {BCFW_A20_300_custom10-\dateconvex.txt};
\addplot+[thick, mark=o, color=cb-green-sea] table[x={time}, y={primal}] {BCFW_A20_300_stoc-\dateconvex.txt};
\addplot+[thick,solid, mark=otimes, mark repeat=30, color=cb-blue-light]
table[x={time}, y={primal}] {BCFW_A20_300_ecyc20-\dateconvex.txt};
\addplot+[thick,solid, mark=diamond, color=cb-salmon-pink]
table[x={time}, y={primal}] {BCFW_A20_300_custom20-\dateconvex.txt};
\end{axis} \end{tikzpicture}
}
\subfloat[][$s=500$]{
\centering
\begin{tikzpicture}[scale=1.0]
\begin{axis}[height=4.2cm,width=4.1cm, legend cell align={left},
ylabel style={yshift=-0.75em,font=\scriptsize}, 
xlabel style={yshift=0.50em,font=\scriptsize},
tick label style={font=\tiny},
legend columns=4,
legend
entries={Full,Cyclic,P-Cyclic,E-Cyclic,$5$-Lazy,$10$-Lazy,$20$-Lazy},
legend style={
    font=\scriptsize,
    at={(1.0,1.75)},
    anchor=north east,
    },
legend to name={legend_conv_ex1},
ytick={1e-1,1,1e1},
xlabel=Time (sec),
%ylabel=$f(x)-f(x^*)$,
grid,
xmin =0, xmax=1300, ymin=4e-2, ymax=1e2, ymode=log, mark repeat=18]
\addplot+[thick, mark=triangle, mark repeat=15, color=cb-burgundy] table[x={time}, y={primal}] {BCFW_A20_500_full-\dateconvex.txt};
\addplot+[thick, mark=square, color=cb-blue] table[x={time}, y={primal}] {BCFW_A20_500_cyclic-\dateconvex.txt};
\addplot+[thick, mark=o, color=cb-green-sea] table[x={time}, y={primal}] {BCFW_A20_500_stoc-\dateconvex.txt};
\addplot+[thick,solid, mark=otimes, mark repeat=30, color=cb-blue-light]
table[x={time}, y={primal}] {BCFW_A20_500_ecyc20-\dateconvex.txt};
\addplot+[thick, mark=oplus, color=cb-brown]
table[x={time}, y={primal}] {BCFW_A20_500_custom5-\dateconvex.txt};
\addplot+[thick, mark=star, color=cb-lilac]
table[x={time}, y={primal}] {BCFW_A20_500_custom10-\dateconvex.txt};
\addplot+[thick,solid, mark=diamond, color=cb-salmon-pink]
table[x={time}, y={primal}] {BCFW_A20_500_custom20-\dateconvex.txt};
\end{axis}
\end{tikzpicture}
}
\quad
\subfloat[][$s=100$]{
\centering
\begin{tikzpicture}[scale=1.0]
\begin{axis}[height=4.2cm,width=4.1cm, legend cell align={left},
ylabel style={yshift=-0.75em,font=\scriptsize}, 
xlabel style={yshift=0.50em,font=\scriptsize},
tick label style={font=\tiny},
scaled x ticks=false,
xtick={0,3000,6000,9000},
minor tick num=0,
xlabel=Iteration ($t$), 
ylabel=$f(x)-f(x^*)$,
grid,
xmin =0, xmax=10000, ymin=5e-3, ymax=1e1, ymode=log, mark repeat=18]
\addplot+[thick, mark=triangle, mark repeat=15, color=cb-burgundy] table[x={iter}, y={primal}] {BCFW_A20_100_full-\dateconvex.txt};
\addplot+[thick, mark=oplus, color=cb-brown]
table[x={iter}, y={primal}] {BCFW_A20_100_custom5-\dateconvex.txt};
\addplot+[thick, mark=square, color=cb-blue] table[x={iter}, y={primal}] {BCFW_A20_100_cyclic-\dateconvex.txt};
\addplot+[thick, mark=star, color=cb-lilac]
table[x={iter}, y={primal}] {BCFW_A20_100_custom10-\dateconvex.txt};
\addplot+[thick, mark=o, color=cb-green-sea] table[x={iter}, y={primal}] {BCFW_A20_100_stoc-\dateconvex.txt};
\addplot+[thick,solid, mark=otimes, mark repeat=30, color=cb-blue-light]
table[x={iter}, y={primal}] {BCFW_A20_100_ecyc20-\dateconvex.txt};
\addplot+[thick,solid, mark=diamond, color=cb-salmon-pink]
table[x={iter}, y={primal}] {BCFW_A20_100_custom20-\dateconvex.txt};
\end{axis}
\end{tikzpicture}
}
\subfloat[][$s=300$]{
\centering
\begin{tikzpicture}[scale=1.0]
\begin{axis}[height=4.2cm,width=4.1cm, legend cell align={left},
ylabel style={yshift=-0.75em,font=\scriptsize}, 
xlabel style={yshift=0.50em,font=\scriptsize},
tick label style={font=\tiny},
xlabel=Iteration ($t$), 
%ylabel=$f(x)-f(x^*)$,
scaled x ticks=false,
xtick={0,3000,6000,9000},
grid,
xmin =0, xmax=10000, ymin=1e-2, ymax=1e2, ymode=log, mark repeat=18]
\addplot+[thick, mark=triangle, mark repeat=15, color=cb-burgundy] table[x={iter}, y={primal}] {BCFW_A20_300_full-\dateconvex.txt};
\addplot+[thick, mark=oplus, color=cb-brown]
table[x={iter}, y={primal}] {BCFW_A20_300_custom5-\dateconvex.txt};
\addplot+[thick, mark=square, color=cb-blue] table[x={iter}, y={primal}] {BCFW_A20_300_cyclic-\dateconvex.txt};
\addplot+[thick, mark=star, color=cb-lilac]
table[x={iter}, y={primal}] {BCFW_A20_300_custom10-\dateconvex.txt};
\addplot+[thick, mark=o, color=cb-green-sea] table[x={iter}, y={primal}] {BCFW_A20_300_stoc-\dateconvex.txt};
\addplot+[thick,solid, mark=otimes, mark repeat=30, color=cb-blue-light]
table[x={iter}, y={primal}] {BCFW_A20_300_ecyc20-\dateconvex.txt};
\addplot+[thick,solid, mark=diamond, color=cb-salmon-pink]
table[x={iter}, y={primal}] {BCFW_A20_300_custom20-\dateconvex.txt};
\end{axis} \end{tikzpicture}
}
\subfloat[][$s=500$]{
\centering
\begin{tikzpicture}[scale=1.0]
\begin{axis}[height=4.2cm,width=4.1cm, legend cell align={left},
ylabel style={yshift=-0.75em,font=\scriptsize}, 
xlabel style={yshift=0.50em,font=\scriptsize},
tick label style={font=\tiny},
legend columns=2,
legend style={font=\tiny},
scaled x ticks=false,
xtick={0,3000,6000,9000},
xlabel=Iteration ($t$), 
%ylabel=$f(x)-f(x^*)$,
grid,
xmin =0, xmax=10000, ymin=4e-2, ymax=1e2, ymode=log, mark repeat=18]
\addplot+[thick, mark=triangle, mark repeat=15, color=cb-burgundy] table[x={iter}, y={primal}] {BCFW_A20_500_full-\dateconvex.txt};
\addplot+[thick, mark=oplus, color=cb-brown]
table[x={iter}, y={primal}] {BCFW_A20_500_custom5-\dateconvex.txt};
\addplot+[thick, mark=square, color=cb-blue] table[x={iter}, y={primal}] {BCFW_A20_500_cyclic-\dateconvex.txt};
\addplot+[thick, mark=star, color=cb-lilac]
table[x={iter}, y={primal}] {BCFW_A20_500_custom10-\dateconvex.txt};
\addplot+[thick, mark=o, color=cb-green-sea] table[x={iter}, y={primal}] {BCFW_A20_500_stoc-\dateconvex.txt};
\addplot+[thick,solid, mark=otimes, mark repeat=30, color=cb-blue-light]
table[x={iter}, y={primal}] {BCFW_A20_500_ecyc20-\dateconvex.txt};
\addplot+[thick,solid, mark=diamond, color=cb-salmon-pink]
table[x={iter}, y={primal}] {BCFW_A20_500_custom20-\dateconvex.txt};
\end{axis}
\end{tikzpicture}
}
\quad
\subfloat[][$s=100$]{
\centering
\begin{tikzpicture}[scale=1.0]
\begin{axis}[height=4.2cm,width=4.1cm, legend cell align={left},
ylabel style={yshift=-0.75em,font=\scriptsize}, 
xlabel style={yshift=0.50em,font=\scriptsize},
tick label style={font=\tiny},
scaled x ticks=false,
xtick={0,3000,6000,9000},
minor tick num=0,
xlabel=Spectr. LMOs, 
ylabel=$f(x)-f(x^*)$,
grid,
xmin =0, xmax=10000, ymin=5e-3, ymax=1e1, ymode=log, mark repeat=18]
\addplot+[thick, mark=triangle, mark repeat=15, color=cb-burgundy] table[x={lmo1}, y={primal}] {BCFW_A20_100_full-\dateconvex.txt};
\addplot+[thick, mark=oplus, color=cb-brown]
table[x={lmo1}, y={primal}] {BCFW_A20_100_custom5-\dateconvex.txt};
\addplot+[thick, mark=square, color=cb-blue] table[x={lmo1}, y={primal}] {BCFW_A20_100_cyclic-\dateconvex.txt};
\addplot+[thick, mark=star, color=cb-lilac]
table[x={lmo1}, y={primal}] {BCFW_A20_100_custom10-\dateconvex.txt};
\addplot+[thick, mark=o, color=cb-green-sea] table[x={lmo1}, y={primal}] {BCFW_A20_100_stoc-\dateconvex.txt};
\addplot+[thick,solid, mark=otimes, mark repeat=30, color=cb-blue-light]
table[x={lmo1}, y={primal}] {BCFW_A20_100_ecyc20-\dateconvex.txt};
\addplot+[thick,solid, mark=diamond, color=cb-salmon-pink]
table[x={lmo1}, y={primal}] {BCFW_A20_100_custom20-\dateconvex.txt};
\end{axis}
\end{tikzpicture}
}
\subfloat[][$s=300$]{
\centering
\begin{tikzpicture}[scale=1.0]
\begin{axis}[height=4.2cm,width=4.1cm, legend cell align={left},
ylabel style={yshift=-0.75em,font=\scriptsize}, 
xlabel style={yshift=0.50em,font=\scriptsize},
tick label style={font=\tiny},
xlabel=Spectr. LMOs, 
%ylabel=$f(x)-f(x^*)$,
scaled x ticks=false,
xtick={0,3000,6000,9000},
grid,
xmin =0, xmax=10000, ymin=1e-2, ymax=1e2, ymode=log, mark repeat=18]
\addplot+[thick, mark=triangle, mark repeat=15, color=cb-burgundy] table[x={lmo1}, y={primal}] {BCFW_A20_300_full-\dateconvex.txt};
\addplot+[thick, mark=oplus, color=cb-brown]
table[x={lmo1}, y={primal}] {BCFW_A20_300_custom5-\dateconvex.txt};
\addplot+[thick, mark=square, color=cb-blue] table[x={lmo1}, y={primal}] {BCFW_A20_300_cyclic-\dateconvex.txt};
\addplot+[thick, mark=star, color=cb-lilac]
table[x={lmo1}, y={primal}] {BCFW_A20_300_custom10-\dateconvex.txt};
\addplot+[thick, mark=o, color=cb-green-sea] table[x={lmo1}, y={primal}] {BCFW_A20_300_stoc-\dateconvex.txt};
\addplot+[thick,solid, mark=otimes, mark repeat=30, color=cb-blue-light]
table[x={lmo1}, y={primal}] {BCFW_A20_300_ecyc20-\dateconvex.txt};
\addplot+[thick,solid, mark=diamond, color=cb-salmon-pink]
table[x={lmo1}, y={primal}] {BCFW_A20_300_custom20-\dateconvex.txt};
\end{axis} \end{tikzpicture}
}
\subfloat[][$s=500$]{
\centering
\begin{tikzpicture}[scale=1.0]
\begin{axis}[height=4.2cm,width=4.1cm, legend cell align={left},
ylabel style={yshift=-0.75em,font=\scriptsize}, 
xlabel style={yshift=0.50em,font=\scriptsize},
tick label style={font=\tiny},
legend columns=2,
legend style={font=\tiny},
scaled x ticks=false,
xtick={0,3000,6000,9000},
xlabel=Spectr. LMOs, 
%ylabel=$f(x)-f(x^*)$,
grid,
xmin =0, xmax=10000, ymin=4e-2, ymax=1e2, ymode=log, mark repeat=18]
\addplot+[thick, mark=triangle, mark repeat=15, color=cb-burgundy] table[x={lmo1}, y={primal}] {BCFW_A20_500_full-\dateconvex.txt};
\addplot+[thick, mark=oplus, color=cb-brown]
table[x={lmo1}, y={primal}] {BCFW_A20_500_custom5-\dateconvex.txt};
\addplot+[thick, mark=square, color=cb-blue] table[x={lmo1}, y={primal}] {BCFW_A20_500_cyclic-\dateconvex.txt};
\addplot+[thick, mark=star, color=cb-lilac]
table[x={lmo1}, y={primal}] {BCFW_A20_500_custom10-\dateconvex.txt};
\addplot+[thick, mark=o, color=cb-green-sea] table[x={lmo1}, y={primal}] {BCFW_A20_500_stoc-\dateconvex.txt};
\addplot+[thick,solid, mark=otimes, mark repeat=30, color=cb-blue-light]
table[x={lmo1}, y={primal}] {BCFW_A20_500_ecyc20-\dateconvex.txt};
\addplot+[thick,solid, mark=diamond, color=cb-salmon-pink]
table[x={lmo1}, y={primal}] {BCFW_A20_500_custom20-\dateconvex.txt};
\end{axis}
\end{tikzpicture}
}
\end{center}
\caption{Results of intersection for
  a cube and spectraplex (Section~\ref{sec:ex1})
displaying the primal gap $f(\bx_t)-f(\bx^*)$ versus
time, iteration, and spectrahedral LMO count for problems with $s^2$
variables and block-activation strategies
\ref{act:1}--\ref{act:4} (E-cyclic uses $K=20$) and
\eqref{e:lazy}.}
\label{fig:ex1-shortstep}
\end{figure}

\begin{figure}
  \pgfplotslegendfromname{legend_adaptive}
  \begin{center}
    \subfloat{
      \centering
      \begin{tikzpicture}[scale=1.0]
      \begin{axis}[height=4.2cm,width=4.1cm, legend cell align={left},
      ylabel style={yshift=-0.75em,font=\scriptsize}, 
      xlabel style={yshift=0.50em,font=\scriptsize},
      tick label style={font=\tiny},
      legend columns=5,
      legend entries={Full-S,Cyclic-S,5-Lazy-S,10-Lazy-S,20-Lazy-S,Full-A,Cyclic-A,5-Lazy-A,10-Lazy-A,20-Lazy-A},
      legend style={font=\tiny, 
      at={(0.0,1.5)},
      anchor=north west,},
      legend to name={legend_adaptive},
      scaled x ticks=false,
      % xtick={0,5000,10000},
      %xlabel=Time (sec), 
      ylabel=$f(x)-f(x^*)$,
      grid,
      xmin =0, xmax=1300, ymin=1e-2, ymax=1e3, ymode=log, mark repeat=18]
      %actual plots short-step
      \addplot+[thick, mark=triangle, mark repeat=15, color=cb-burgundy] table[x={time}, y={primal}] {BCFW_A20_500_full-\dateconvex.txt};
      \addplot+[thick, mark=square, color=cb-blue] table[x={time}, y={primal}] {BCFW_A20_500_cyclic-\dateconvex.txt};

      % dummy plots for legend
      \addlegendimage{thick, mark=oplus, color=cb-brown}
      \addlegendimage{thick, mark=star, color=cb-lilac}
      \addlegendimage{thick, solid, mark=diamond, color=cb-salmon-pink}

      % actual plots adaptive
      \addplot+[thick, mark=o, color=cb-green-sea] table[x={time}, y={primal}] {BCFW_adpative_A20_500_full-\dateadaptive.txt};
      \addplot+[thick, solid, mark=otimes, mark repeat=30, color=cb-blue-light] table[x={time}, y={primal}] {BCFW_adpative_A20_500_cyclic-\dateadaptive.txt};

      % dummy plots for legend
      \addlegendimage{thick, mark=10-pointed star, color=cb-clay}
      \addlegendimage{thick, mark=|, color=cb-rose}
      \addlegendimage{thick, solid, mark=pentagon, color=cb-black}
      \end{axis}
      \end{tikzpicture}
    }
    \subfloat{
      \centering
      \begin{tikzpicture}[scale=1.0]
      \begin{axis}[height=4.2cm,width=4.1cm, legend cell align={left},
      ylabel style={yshift=-0.75em,font=\scriptsize}, 
      xlabel style={yshift=0.50em,font=\scriptsize},
      tick label style={font=\tiny},
      legend columns=2,
      legend style={font=\tiny},
      scaled x ticks=false,
      xtick={0,5000,10000},
      %xlabel=Iteration $t$, 
      grid,
      xmin =0, xmax=10000, ymin=1e-2, ymax=1e3, ymode=log, mark repeat=18]
      \addplot+[thick, mark=triangle, mark repeat=15, color=cb-burgundy] table[x={iter}, y={primal}] {BCFW_A20_500_full-\dateconvex.txt};
      \addplot+[thick, mark=o, color=cb-green-sea] table[x={iter}, y={primal}] {BCFW_adpative_A20_500_full-\dateadaptive.txt};
      \addplot+[thick, mark=square, color=cb-blue] table[x={iter}, y={primal}] {BCFW_A20_500_cyclic-\dateconvex.txt};
      \addplot+[thick,solid, mark=otimes, mark repeat=30, color=cb-blue-light]
      table[x={iter}, y={primal}] {BCFW_adpative_A20_500_cyclic-\dateadaptive.txt};
      \end{axis}
      \end{tikzpicture}
    }
    \subfloat{
      \centering
      \begin{tikzpicture}[scale=1.0]
      \begin{axis}[height=4.2cm,width=4.1cm, legend cell align={left},
      ylabel style={yshift=-0.75em,font=\scriptsize}, 
      xlabel style={yshift=0.50em,font=\scriptsize},
      tick label style={font=\tiny},
      legend columns=2,
      legend style={font=\tiny},
      scaled x ticks=false,
      xtick={0,5000,10000},
      %xlabel=Spectr. LMOs,
      grid,
      xmin =0, xmax=10000, ymin=1e-2, ymax=1e3, ymode=log, mark repeat=18]
      \addplot+[thick, mark=triangle, mark repeat=15, color=cb-burgundy] table[x={lmo1}, y={primal}] {BCFW_A20_500_full-\dateconvex.txt};
      \addplot+[thick, mark=o, color=cb-green-sea] table[x={lmo1}, y={primal}] {BCFW_adpative_A20_500_full-\dateadaptive.txt};
      \addplot+[thick, mark=square, color=cb-blue] table[x={lmo1}, y={primal}] {BCFW_A20_500_cyclic-\dateconvex.txt};
      \addplot+[thick,solid, mark=otimes, mark repeat=30, color=cb-blue-light]
      table[x={lmo1}, y={primal}] {BCFW_adpative_A20_500_cyclic-\dateadaptive.txt};
      \end{axis}
      \end{tikzpicture}
    }
    \quad
    \subfloat{
      \centering
      \begin{tikzpicture}[scale=1.0]
      \begin{axis}[height=4.2cm,width=4.1cm, legend cell align={left},
      ylabel style={yshift=-0.75em,font=\scriptsize}, 
      xlabel style={yshift=0.50em,font=\scriptsize},
      tick label style={font=\tiny},
      legend columns=2,
      legend style={font=\tiny, 
      at={(1.0,1.5)},
      anchor=north east,},
      scaled x ticks=false,
      xlabel=Time (sec), 
      ylabel=$f(x)-f(x^*)$,
      grid,
      xmin =0, xmax=500, ymin=1e-2, ymax=1e3, ymode=log, mark repeat=18]
      \addplot+[thick, mark=oplus, color=cb-brown]
      table[x={time}, y={primal}] {BCFW_A20_500_custom5-\dateconvex.txt};
      \addplot+[thick, mark=10-pointed star, color=cb-clay]
      table[x={time}, y={primal}] {BCFW_adpative_A20_500_custom5-\dateadaptive.txt};
      \addplot+[thick, mark=star, color=cb-lilac]
      table[x={time}, y={primal}] {BCFW_A20_500_custom10-\dateconvex.txt};
      \addplot+[thick, mark=|, color=cb-rose]
      table[x={time}, y={primal}] {BCFW_adpative_A20_500_custom10-\dateadaptive.txt};
      \addplot+[thick,solid, mark=diamond, color=cb-salmon-pink]
      table[x={time}, y={primal}] {BCFW_A20_500_custom20-\dateconvex.txt};
      \addplot+[thick,solid, mark=pentagon, color=cb-black]
      table[x={time}, y={primal}] {BCFW_adpative_A20_500_custom20-\dateadaptive.txt};
      \end{axis}
      \end{tikzpicture}
    }
    \subfloat{
      \centering
      \begin{tikzpicture}[scale=1.0]
      \begin{axis}[height=4.2cm,width=4.1cm, legend cell align={left},
      ylabel style={yshift=-0.75em,font=\scriptsize}, 
      xlabel style={yshift=0.50em,font=\scriptsize},
      tick label style={font=\tiny},
      legend columns=2,
      legend style={font=\tiny},
      scaled x ticks=false,
      xtick={0,5000,10000},
      xlabel=Iteration ($t$), 
      %ylabel=$f(x)-f(x^*)$,
      grid,
      xmin =0, xmax=10000, ymin=1e-2, ymax=1e3, ymode=log, mark repeat=18]
      \addplot+[thick, mark=oplus, color=cb-brown]
      table[x={iter}, y={primal}] {BCFW_A20_500_custom5-\dateconvex.txt};
      \addplot+[thick, mark=10-pointed star, color=cb-clay]
      table[x={iter}, y={primal}] {BCFW_adpative_A20_500_custom5-\dateadaptive.txt};
      \addplot+[thick, mark=star, color=cb-lilac]
      table[x={iter}, y={primal}] {BCFW_A20_500_custom10-\dateconvex.txt};
      \addplot+[thick, mark=|, color=cb-rose]
      table[x={iter}, y={primal}] {BCFW_adpative_A20_500_custom10-\dateadaptive.txt};
      \addplot+[thick,solid, mark=diamond, color=cb-salmon-pink]
      table[x={iter}, y={primal}] {BCFW_A20_500_custom20-\dateconvex.txt};
      \addplot+[thick,solid, mark=pentagon, color=cb-black]
      table[x={iter}, y={primal}] {BCFW_adpative_A20_500_custom20-\dateadaptive.txt};
      \end{axis}
      \end{tikzpicture}
    }
    \subfloat{
      \centering
      \begin{tikzpicture}[scale=1.0]
      \begin{axis}[height=4.2cm,width=4.1cm, legend cell align={left},
      ylabel style={yshift=-0.75em,font=\scriptsize}, 
      xlabel style={yshift=0.50em,font=\scriptsize},
      tick label style={font=\tiny},
      legend columns=2,
      legend style={font=\tiny},
      scaled x ticks=false,
      xtick={0,1000,2000},
      xlabel=Spectr. LMOs, 
      %ylabel=$f(x)-f(x^*)$,
      grid,
      xmin =0, xmax=2000, ymin=1e-2, ymax=1e3, ymode=log, mark repeat=18]
      \addplot+[thick, mark=oplus, color=cb-brown]
      table[x={lmo1}, y={primal}] {BCFW_A20_500_custom5-\dateconvex.txt};
      \addplot+[thick, mark=10-pointed star, color=cb-clay]
      table[x={lmo1}, y={primal}] {BCFW_adpative_A20_500_custom5-\dateadaptive.txt};
      \addplot+[thick, mark=star, color=cb-lilac]
      table[x={lmo1}, y={primal}] {BCFW_A20_500_custom10-\dateconvex.txt};
      \addplot+[thick, mark=|, color=cb-rose]
      table[x={lmo1}, y={primal}] {BCFW_adpative_A20_500_custom10-\dateadaptive.txt};
      \addplot+[thick,solid, mark=diamond, color=cb-salmon-pink]
      table[x={lmo1}, y={primal}] {BCFW_A20_500_custom20-\dateconvex.txt};
      \addplot+[thick,solid, mark=pentagon, color=cb-black]
      table[x={lmo1}, y={primal}] {BCFW_adpative_A20_500_custom20-\dateadaptive.txt};
      \end{axis}
      \end{tikzpicture}
    }
  \end{center}
  \caption{Results of intersection for a cube and spectraplex for $s=500$
    (Section~\ref{sec:ex1}) comparing adaptive step sizes
    (Algorithm~\ref{a:abcg}) labelled with suffix ``-A'' to short-step
    sizes (Algorithm~\ref{a:bcg}) labelled with suffix ``-S''. The first row compares the performance of the the Full and Cyclic activation strategies, the second row compares the lazy activation strategies.}
  \label{fig:ex1-comparison}
\end{figure}

\subsection{Experiment 2: Difference of convex quadratics}
\label{sec:ex2}

The goal is to minimize a difference of convex quadratic
functions of two collated matrices in $\mathbb{R}^{s\times s}$,
where the submatrices are constrained to an $\ell_{\infty}$ ball and a
nuclear-norm ball respectively. In order to examine the performance
of Algorithm~\ref{a:bcg} when the number of components is large,
we split the $\ell_{\infty}$ constraint into $s$ separate
constraints. Hence, we set $C_1=\cdots=C_s\coloneqq
\menge{x\in\mathbb{R}^s}{\|x\|_{\infty}\leq 1}$, and
$C_{s+1}=\menge{x\in\mathbb{R}^{s\times
s}}{\|x\|_{\text{nuc}}\leq 1}$. For $\bx\in\CCX$ we use $[\bx]$
to denote the collated $2s\times s$ matrix of its components.
For each problem instance, the kernel $A,B$ of each quadratic is
generated by projecting a matrix with random normal entries of mean
$0$ and standard deviation $1$ onto the set of positive
semidefinite matrices. Altogether, we seek to solve the following
difference-of-convex problem involving the Frobenius inner product
\begin{equation}
\label{e:ex2}
\minimize_{\bx\in C_1\times\cdots\times C_{s+1}}
\frac{1}{2}
\Big(\Big\langle{[\bx]}\;\Big|\:{[\bx] A}\Big\rangle
-
\Big\langle{[\bx]}\;\Big|\:{[\bx] B}\Big\rangle\Big).
\end{equation}
Note the objective function of \eqref{e:ex2} is
smooth and nonseparable.
In the experiments we used the
Froebenous norm of \(A-B\)
as smoothness constant \(L_{f}\).
%as it is easier to compute than the best smoothness constant
%(spectral norm of \(A-B\)).
For each instance of
\eqref{e:ex2}, we verify that $A-B$ is indefinite, hence
the objective is also neither convex nor concave.
Since the $\ell_{\infty}$ LMO
is far cheaper than the nuclear norm ball LMO \cite{Comb21LMO},
similarly to 
Section~\ref{sec:ex1}, we consider a family of customized
activation strategies that delay evaluating the most expensive
operator $\lmo_{s+1}$. In addition, on the ``lazy'' iterations
involving only the LMOs of the $\ell_{\infty}$ norm ball, we
perform a parallel update involving a random subset of $I\setminus
\{s+1\}$ of size $p$:
\begin{equation}
\label{e:bx2}
(\forall t\in\NN)\quad I_t=\begin{cases}
I &\text{if}\;\;t\equiv 0 \pmod q \\
\{i_1,\ldots,i_p\}\subset I\setminus \{s+1\} &\text{otherwise.}
\end{cases}
\end{equation}

Averaged results from $20$ instances of \eqref{e:ex2} are shown in
Figure~\ref{fig:22}. Since the problem is
nonconvex, we plot the
minimal F-W gap observed (see Theorem~\ref{t:ncv-conv}). Since full
F-W gaps are typically
unavailable in BCFW (only partial gaps for
the activated blocks $(G_i)_{i\in I_t}$ are computed), iterates
were stored during the run of Algorithm~\ref{a:bcg} and full F-W
gaps were computed post-hoc. 

Similarly to Section~\ref{sec:ex1}, new selection strategies
allowed by Assumption~\ref{a:1} can yield similar per-iteration
performance to that of full-activation Frank-Wolfe; furthermore,
since the iterations frequently involve the cheaper LMOs, this can
yield faster convergence in wall-clock time. However, these results
also demonstrate that, if the number of activated components in
$I_t$ is too small and the $(s+1)$st component is activated too
infrequently, results may worsen; this is reflected in the cyclic,
$P$-cyclic, E-cyclic ($K=2s$), and $(p,q)=(2,20)$ results. The
on-par performance with full-activation Frank-Wolfe suggests that,
particularly if gradients are expensive relative to LMOs (as is the
case here, but not in Experiment 1), selecting a larger number of
(cheap) LMOs to update may yield a beneficial balance of
progress-per-iteration and wallclock time.

\begin{figure}[htb]
\pgfplotslegendfromname{legend_nonconv_ex2}
\begin{center}
\subfloat[][$s=100$]{
\begin{tikzpicture}[scale=1.0]
\begin{axis}[height=4.2cm,width=4.1cm, legend cell align={left},
label style={font=\tiny},
ylabel style={yshift=-0.75em,font=\scriptsize}, 
xlabel style={yshift=0.50em,font=\scriptsize},
tick label style={font=\tiny},
legend columns=2,
%legend entries={$(2,20)$\\
%                Full\\
%                $(10,10)$\\
%                Cyclic\\
%                $(\frac{n}{2},5)$\\
%                P-Cyclic\\
%               $(n,2)$\\
%    },
legend style={font=\tiny},
xlabel=Time (sec), 
ylabel=Min. F-W gap,
grid,
xmin =0, xmax=20, ymin=1e-1, ymax=1e6, ymode=log, mark repeat=18]
\addplot+[thick, mark=diamond, solid, color=cb-salmon-pink]
table[x={time}, y={dmin}] {DC_A20_100_custom20-\datenonconvex.txt};
\addplot+[thick, mark=triangle, mark repeat=15, color=cb-burgundy] table[x={time}, y={dmin}] {DC_A20_100_full-\datenonconvex.txt};
\addplot+[thick, mark=oplus, color=cb-brown] table[x={time},
y={dmin}] {DC_A20_100_custom10-\datenonconvex.txt};
\addplot+[thick, mark=square, color=cb-blue] table[x={time}, y={dmin}] {DC_A20_100_cyclic-\datenonconvex.txt};
\addplot+[thick, mark=star,solid, mark repeat=20, color=cb-lilac]
table[x={time}, y={dmin}] {DC_A20_100_custom5-\datenonconvex.txt};
\addplot+[thick, mark=o,solid, color=cb-green-sea] table[x={time}, y={dmin}] {DC_A20_100_stoc-\datenonconvex.txt};
\addplot+[thick, mark=otimes*,solid, mark repeat=20,
color=cb-green-lime] table[x={time}, y={dmin}] {DC_A20_100_custom2-\datenonconvex.txt};
\addplot+[thick,solid, mark=otimes, mark repeat=20, color=cb-blue-light]
table[x={time}, y={dmin}] {DC_A20_100_ecyc200-\datenonconvex.txt};
\end{axis}
\end{tikzpicture}
}
\subfloat[][$s=300$]{
\begin{tikzpicture}[scale=1.0]
\begin{axis}[height=4.2cm,width=4.1cm, legend cell align={left},
label style={font=\tiny},
ylabel style={yshift=-0.75em,font=\scriptsize}, 
xlabel style={yshift=0.50em,font=\scriptsize},
tick label style={font=\tiny},
legend columns=2,
%legend entries={$(2,20)$\\
%                Full\\
%                $(10,10)$\\
%                Cyclic\\
%                $(\frac{n}{2},5)$\\
%                P-Cyclic\\
%               $(n,2)$\\
%    },
legend style={font=\tiny},
xlabel=Time (sec), 
%ylabel=Min. F-W gap,
grid,
xmin =0, xmax=100, ymin=1e1, ymax=1e7, ymode=log, mark repeat=18]
\addplot+[thick, mark=diamond, solid, color=cb-salmon-pink]
table[x={time}, y={dmin}] {DC_A20_300_custom20-\datenonconvex.txt};
\addplot+[thick, mark=triangle, mark repeat=15, color=cb-burgundy] table[x={time}, y={dmin}] {DC_A20_300_full-\datenonconvex.txt};
\addplot+[thick, mark=oplus, color=cb-brown] table[x={time},
y={dmin}] {DC_A20_300_custom10-\datenonconvex.txt};
\addplot+[thick, mark=square, mark repeat=8,color=cb-blue] table[x={time}, y={dmin}] {DC_A20_300_cyclic-\datenonconvex.txt};
\addplot+[thick, mark=star,solid, mark repeat=20, color=cb-lilac]
table[x={time}, y={dmin}] {DC_A20_300_custom5-\datenonconvex.txt};
\addplot+[thick,mark repeat=6, mark=o,solid, color=cb-green-sea] table[x={time}, y={dmin}] {DC_A20_300_stoc-\datenonconvex.txt};
\addplot+[thick, mark=otimes*,solid, mark repeat=20,
color=cb-green-lime] table[x={time}, y={dmin}] {DC_A20_300_custom2-\datenonconvex.txt};
\addplot+[thick,solid, mark=otimes, mark repeat=20, color=cb-blue-light]
table[x={time}, y={dmin}] {DC_A20_300_ecyc600-\datenonconvex.txt};
\end{axis}
\end{tikzpicture}
}
\subfloat[][$s=500$]{
\begin{tikzpicture}[scale=1.0]
\begin{axis}[height=4.2cm,width=4.1cm, legend cell align={left},
label style={font=\tiny},
ylabel style={yshift=-0.75em,font=\scriptsize}, 
xlabel style={yshift=0.50em,font=\scriptsize},
tick label style={font=\tiny},
scaled x ticks=false,
legend columns=4,
legend entries={$(2,20)$\\
                $(10,10)$\\
                $(\frac{s}{2},5)$\\
               $(s,2)$\\
               Full\\
               Cyclic\\
               P-Cyclic\\
               E-Cyclic\\
    },
legend to name={legend_nonconv_ex2},
legend style={
    font=\tiny,
    at={(1.0,1.6)},
    anchor=north east,
    },
xlabel=Time (sec),
%ylabel=Min. F-W gap,
grid,
xmin =0, xmax=240, ymin=1e2, ymax=5e7, ymode=log, mark repeat=18]
\addplot+[thick, mark=diamond, solid, color=cb-salmon-pink]
table[x={time}, y={dmin}] {DC_A20_500_custom20-\datenonconvex.txt};
\addplot+[thick, mark=oplus, color=cb-brown] table[x={time},
y={dmin}] {DC_A20_500_custom10-\datenonconvex.txt};
\addplot+[thick, mark=star,solid, mark repeat=20, color=cb-lilac]
table[x={time}, y={dmin}] {DC_A20_500_custom5-\datenonconvex.txt};
\addplot+[thick, mark=otimes*,solid, mark repeat=20,
color=cb-green-lime] table[x={time}, y={dmin}] {DC_A20_500_custom2-\datenonconvex.txt};
\addplot+[thick, mark=triangle, mark repeat=15, color=cb-burgundy] table[x={time}, y={dmin}] {DC_A20_500_full-\datenonconvex.txt};
\addplot+[thick, mark=square,mark repeat=8, color=cb-blue] table[x={time}, y={dmin}] {DC_A20_500_cyclic-\datenonconvex.txt};
\addplot+[thick,mark repeat=6, mark=o,solid, color=cb-green-sea] table[x={time}, y={dmin}] {DC_A20_500_stoc-\datenonconvex.txt};
\addplot+[thick,solid, mark=otimes, mark repeat=20, color=cb-blue-light]
table[x={time}, y={dmin}] {DC_A20_500_ecyc1000-\datenonconvex.txt};
\end{axis}
\end{tikzpicture}
}

\subfloat[][$s=100$]{
\begin{tikzpicture}[scale=1.0]
\begin{axis}[height=4.2cm,width=4.1cm, legend cell align={left},
label style={font=\tiny},
ylabel style={yshift=-0.75em,font=\scriptsize}, 
xlabel style={yshift=0.50em,font=\scriptsize},
tick label style={font=\tiny},
scaled x ticks=false,
xtick={0,3000,6000,9000},
legend columns=2,
%legend entries={$(2,20)$\\
%                Full\\
%                $(10,10)$\\
%                Cyclic\\
%                $(\frac{n}{2},5)$\\
%                P-Cyclic\\
%               $(n,2)$\\
%    },
legend style={font=\tiny},
xlabel=Iteration ($t$),
ylabel=Min. F-W gap,
grid,
xmin =0, xmax=10000, ymin=1e-1, ymax=1e6, ymode=log, mark repeat=18]
\addplot+[thick, mark=diamond, solid, color=cb-salmon-pink]
table[x={iter}, y={dmin}] {DC_A20_100_custom20-\datenonconvex.txt};
\addplot+[thick, mark=triangle, mark repeat=15, color=cb-burgundy] table[x={iter}, y={dmin}] {DC_A20_100_full-\datenonconvex.txt};
\addplot+[thick, mark=oplus, color=cb-brown] table[x={iter},
y={dmin}] {DC_A20_100_custom10-\datenonconvex.txt};
\addplot+[thick, mark=square, color=cb-blue] table[x={iter}, y={dmin}] {DC_A20_100_cyclic-\datenonconvex.txt};
\addplot+[thick, mark=star,solid, mark repeat=20, color=cb-lilac]
table[x={iter}, y={dmin}] {DC_A20_100_custom5-\datenonconvex.txt};
\addplot+[thick, mark=o,solid, color=cb-green-sea] table[x={iter}, y={dmin}] {DC_A20_100_stoc-\datenonconvex.txt};
\addplot+[thick, mark=otimes*,solid, mark repeat=20,
color=cb-green-lime] table[x={iter}, y={dmin}] {DC_A20_100_custom2-\datenonconvex.txt};
\addplot+[thick,solid, mark=otimes, mark repeat=20, color=cb-blue-light]
table[x={iter}, y={dmin}] {DC_A20_100_ecyc200-\datenonconvex.txt};
\end{axis}
\end{tikzpicture}
}
\subfloat[][$s=300$]{
\begin{tikzpicture}[scale=1.0]
\begin{axis}[height=4.2cm,width=4.1cm, legend cell align={left},
label style={font=\tiny},
ylabel style={yshift=-0.75em,font=\scriptsize}, 
xlabel style={yshift=0.50em,font=\scriptsize},
tick label style={font=\tiny},
scaled x ticks=false,
xtick={0,3000,6000,9000},
legend columns=2,
%legend entries={$(2,20)$\\
%                Full\\
%                $(10,10)$\\
%                Cyclic\\
%                $(\frac{n}{2},5)$\\
%                P-Cyclic\\
%               $(n,2)$\\
%    },
legend style={font=\tiny},
xlabel=Iteration ($t$),
%ylabel=Min. F-W gap,
grid,
xmin =0, xmax=10000, ymin=1e1, ymax=1e7, ymode=log, mark repeat=18]
\addplot+[thick, mark=diamond, solid, color=cb-salmon-pink]
table[x={iter}, y={dmin}] {DC_A20_300_custom20-\datenonconvex.txt};
\addplot+[thick, mark=triangle, mark repeat=15, color=cb-burgundy] table[x={iter}, y={dmin}] {DC_A20_300_full-\datenonconvex.txt};
\addplot+[thick, mark=oplus, color=cb-brown] table[x={iter},
y={dmin}] {DC_A20_300_custom10-\datenonconvex.txt};
\addplot+[thick, mark=square, mark repeat=8,color=cb-blue] table[x={iter}, y={dmin}] {DC_A20_300_cyclic-\datenonconvex.txt};
\addplot+[thick, mark=star,solid, mark repeat=20, color=cb-lilac]
table[x={iter}, y={dmin}] {DC_A20_300_custom5-\datenonconvex.txt};
\addplot+[thick,mark repeat=10, mark=o,solid, color=cb-green-sea] table[x={iter}, y={dmin}] {DC_A20_300_stoc-\datenonconvex.txt};
\addplot+[thick, mark=otimes*,solid, mark repeat=20,
color=cb-green-lime] table[x={iter}, y={dmin}] {DC_A20_300_custom2-\datenonconvex.txt};
\addplot+[thick,solid, mark=otimes, mark repeat=20, color=cb-blue-light]
table[x={iter}, y={dmin}] {DC_A20_300_ecyc600-\datenonconvex.txt};
\end{axis}
\end{tikzpicture}
}
\subfloat[][$s=500$]{
\begin{tikzpicture}[scale=1.0]
\begin{axis}[height=4.2cm,width=4.1cm, legend cell align={left},
label style={font=\tiny},
ylabel style={yshift=-0.75em,font=\scriptsize}, 
xlabel style={yshift=0.50em,font=\scriptsize},
tick label style={font=\tiny},
scaled x ticks=false,
xtick={0,3000,6000,9000},
legend columns=2,
%legend entries={$(2,20)$\\
%                Full\\
%                $(10,10)$\\
%                Cyclic\\
%                $(\frac{n}{2},5)$\\
%                P-Cyclic\\
%               $(n,2)$\\
%    },
legend style={font=\tiny},
xlabel=Iteration ($t$),
%ylabel=Min. F-W gap,
grid,
xmin =0, xmax=10000, ymin=1e2, ymax=5e7, ymode=log, mark repeat=18]
\addplot+[thick, mark=diamond, solid, color=cb-salmon-pink]
table[x={iter}, y={dmin}] {DC_A20_500_custom20-\datenonconvex.txt};
\addplot+[thick, mark=triangle, mark repeat=15, color=cb-burgundy] table[x={iter}, y={dmin}] {DC_A20_500_full-\datenonconvex.txt};
\addplot+[thick, mark=oplus, color=cb-brown] table[x={iter},
y={dmin}] {DC_A20_500_custom10-\datenonconvex.txt};
\addplot+[thick, mark=square, mark repeat=8,color=cb-blue] table[x={iter}, y={dmin}] {DC_A20_500_cyclic-\datenonconvex.txt};
\addplot+[thick, mark=star,solid, mark repeat=20, color=cb-lilac]
table[x={iter}, y={dmin}] {DC_A20_500_custom5-\datenonconvex.txt};
\addplot+[thick,mark repeat=10, mark=o,solid, color=cb-green-sea] table[x={iter}, y={dmin}] {DC_A20_500_stoc-\datenonconvex.txt};
\addplot+[thick, mark=otimes*,solid, mark repeat=20,
color=cb-green-lime] table[x={iter}, y={dmin}] {DC_A20_500_custom2-\datenonconvex.txt};
\addplot+[thick,solid, mark=otimes, mark repeat=20, color=cb-blue-light]
table[x={iter}, y={dmin}] {DC_A20_500_ecyc1000-\datenonconvex.txt};
\end{axis}
\end{tikzpicture}
}
\quad
\subfloat[][$s=100$]{
\begin{tikzpicture}[scale=1.0]
\begin{axis}[height=4.2cm,width=4.1cm, legend cell align={left},
label style={font=\tiny},
ylabel style={yshift=-0.75em,font=\scriptsize}, 
xlabel style={yshift=0.50em,font=\scriptsize},
tick label style={font=\tiny},
scaled x ticks=false,
xtick={0,2000,4000},
legend columns=2,
%legend entries={$(2,20)$\\
%                Full\\
%                $(10,10)$\\
%                Cyclic\\
%                $(\frac{n}{2},5)$\\
%                P-Cyclic\\
%               $(n,2)$\\
%    },
legend style={font=\tiny},
xlabel=Nucl. LMO,
ylabel=Min. F-W gap,
grid,
xmin =0, xmax=5000, ymin=1e-1, ymax=1e6, ymode=log, mark repeat=18]
\addplot+[thick, mark=diamond, solid, color=cb-salmon-pink]
table[x={lmo2}, y={dmin}] {DC_A20_100_custom20-\datenonconvex.txt};
\addplot+[thick, mark=triangle, mark repeat=15, color=cb-burgundy] table[x={lmo2}, y={dmin}] {DC_A20_100_full-\datenonconvex.txt};
\addplot+[thick, mark=oplus, color=cb-brown] table[x={lmo2},
y={dmin}] {DC_A20_100_custom10-\datenonconvex.txt};
\addplot+[thick, mark=square, color=cb-blue] table[x={lmo2}, y={dmin}] {DC_A20_100_cyclic-\datenonconvex.txt};
\addplot+[thick, mark=star,solid, mark repeat=20, color=cb-lilac]
table[x={lmo2}, y={dmin}] {DC_A20_100_custom5-\datenonconvex.txt};
\addplot+[thick, mark=o,solid, color=cb-green-sea] table[x={lmo2}, y={dmin}] {DC_A20_100_stoc-\datenonconvex.txt};
\addplot+[thick, mark=otimes*,solid, mark repeat=20,
color=cb-green-lime] table[x={lmo2}, y={dmin}] {DC_A20_100_custom2-\datenonconvex.txt};
\addplot+[thick,solid, mark=otimes, mark repeat=20, color=cb-blue-light]
table[x={lmo2}, y={dmin}] {DC_A20_100_ecyc200-\datenonconvex.txt};
\end{axis}
\end{tikzpicture}
}
\subfloat[][$s=300$]{
\begin{tikzpicture}[scale=1.0]
\begin{axis}[height=4.2cm,width=4.1cm, legend cell align={left},
label style={font=\tiny},
ylabel style={yshift=-0.75em,font=\scriptsize}, 
xlabel style={yshift=0.50em,font=\scriptsize},
tick label style={font=\tiny},
scaled x ticks=false,
xtick={0,2000,4000},
legend columns=2,
%legend entries={$(2,20)$\\
%                Full\\
%                $(10,10)$\\
%                Cyclic\\
%                $(\frac{n}{2},5)$\\
%                P-Cyclic\\
%               $(n,2)$\\
%    },
legend style={font=\tiny},
xlabel=Nucl. LMOs,
%ylabel=Min. F-W gap,
grid,
xmin =0, xmax=5000, ymin=1e1, ymax=1e7, ymode=log, mark repeat=18]
\addplot+[thick, mark=diamond, solid, color=cb-salmon-pink]
table[x={lmo2}, y={dmin}] {DC_A20_300_custom20-\datenonconvex.txt};
\addplot+[thick, mark=triangle, mark repeat=15, color=cb-burgundy] table[x={lmo2}, y={dmin}] {DC_A20_300_full-\datenonconvex.txt};
\addplot+[thick, mark=oplus, color=cb-brown] table[x={lmo2},
y={dmin}] {DC_A20_300_custom10-\datenonconvex.txt};
\addplot+[thick, mark=square, mark repeat=12, color=cb-blue] table[x={lmo2}, y={dmin}] {DC_A20_300_cyclic-\datenonconvex.txt};
\addplot+[thick, mark=star,solid, mark repeat=20, color=cb-lilac]
table[x={lmo2}, y={dmin}] {DC_A20_300_custom5-\datenonconvex.txt};
\addplot+[thick,mark repeat=10, mark=o,solid, color=cb-green-sea] table[x={lmo2}, y={dmin}] {DC_A20_300_stoc-\datenonconvex.txt};
\addplot+[thick, mark=otimes*,solid, mark repeat=20,
color=cb-green-lime] table[x={lmo2}, y={dmin}] {DC_A20_300_custom2-\datenonconvex.txt};
\addplot+[thick,solid, mark=otimes, mark repeat=20, color=cb-blue-light]
table[x={lmo2}, y={dmin}] {DC_A20_300_ecyc600-\datenonconvex.txt};
\end{axis}
\end{tikzpicture}
}
\subfloat[][$s=500$]{
\begin{tikzpicture}[scale=1.0]
\begin{axis}[height=4.2cm,width=4.1cm, legend cell align={left},
label style={font=\tiny},
ylabel style={yshift=-0.75em,font=\scriptsize}, 
xlabel style={yshift=0.50em,font=\scriptsize},
tick label style={font=\tiny},
scaled x ticks=false,
xtick={0,2000,4000},
legend columns=2,
%legend entries={$(2,20)$\\
%                Full\\
%                $(10,10)$\\
%                Cyclic\\
%                $(\frac{n}{2},5)$\\
%                P-Cyclic\\
%               $(n,2)$\\
%    },
legend style={font=\tiny},
xlabel=Nucl. LMOs,
%ylabel=Min. F-W gap,
grid,
xmin =0, xmax=5000, ymin=1e2, ymax=5e7, ymode=log, mark repeat=18]
\addplot+[thick, mark=diamond, solid, color=cb-salmon-pink]
table[x={lmo2}, y={dmin}] {DC_A20_500_custom20-\datenonconvex.txt};
\addplot+[thick, mark=triangle, mark repeat=15, color=cb-burgundy] table[x={lmo2}, y={dmin}] {DC_A20_500_full-\datenonconvex.txt};
\addplot+[thick, mark=oplus, color=cb-brown] table[x={lmo2},
y={dmin}] {DC_A20_500_custom10-\datenonconvex.txt};
\addplot+[thick, mark=square,mark repeat=8, color=cb-blue] table[x={lmo2}, y={dmin}] {DC_A20_500_cyclic-\datenonconvex.txt};
\addplot+[thick, mark=star,solid, mark repeat=20, color=cb-lilac]
table[x={lmo2}, y={dmin}] {DC_A20_500_custom5-\datenonconvex.txt};
\addplot+[thick,mark repeat=6, mark=o,solid, color=cb-green-sea] table[x={lmo2}, y={dmin}] {DC_A20_500_stoc-\datenonconvex.txt};
\addplot+[thick, mark=otimes*,solid, mark repeat=20,
color=cb-green-lime] table[x={lmo2}, y={dmin}] {DC_A20_500_custom2-\datenonconvex.txt};
\addplot+[thick,solid, mark=otimes, mark repeat=20, color=cb-blue-light]
table[x={lmo2}, y={dmin}] {DC_A20_500_ecyc1000-\datenonconvex.txt};
\end{axis}
\end{tikzpicture}
}
\end{center}
\caption{
Results for difference of quadratic functions (Section~\ref{sec:ex2})
displaying the minimal F-W gap versus time,
iteration, and nuclear norm LMO count for problems with
$s^2$ variables and block-activation strategies
\ref{act:1}--\ref{act:3} and \eqref{e:bx2}; $K=2s$ for E-cyclic
activation; for $(p,q)$ activation,
\(p\) is the number of ``cheaper'' coordinates activated per iteration,
and \(q\) is the frequency of full activation.
}
\label{fig:22}
\end{figure}

\subsection{Experiment 3: Structural SVM training}
\label{sec:ex3}

In the final experiment, we consider the structural SVM training problem for sequence labeling from \cite{Laco13} and analyze the OCR dataset \cite{Taskar03}, which contains $6251$ instances with $4028$ features.
This task is solved via a dual reformulation,
detailed in \cite{Laco13}, resulting in a problem of the form
\eqref{e:p}.
The block-coordinate Frank-Wolfe (BCFW) method in \cite{Laco13}
uses a permuted cyclic update strategy satisfying
Assumption~\ref{a:1}, but relies on componentwise optimal step
sizes on the dual problem, which are not guaranteed to converge under parallel updates (see Example~\ref{ex:nonconv}).

Computing the Lipschitz smoothness constant for this dual problem is infeasible, despite its quadratic objective, because the immense size of the the Hessian, which correpsonds to the number of potential outputs.
For the OCR dataset, with maximum sequence length $14$ and $36$ possible labels per position, there are $36^{14} \approx 6 \cdot 10^{21}$ potential outputs.
Consequently, BCFW with short-step updates must approximate the
smoothness constant, making this a natural use case for Algorithm
\ref{a:abcg}.

As computing the dual gradient involves the same large matrix and is thus intractable, we rely on
the smoothness inequality \eqref{e:2} rather than the interpolability condition in \eqref{eq:smooth3} (see Remark~\ref{r:smooth}).
While this still requires maintaining dual variables, they are highly sparse due to the structure of the LMO over the unit simplex, so the additional storage is manageable.
In contrast, the primal-dual method in \cite{Laco13} with exact line search only stores primal variables, avoiding this overhead.

Figure~\ref{fig:ex3} compares Algorithm \ref{a:abcg} with P-Cyclic
block updates of size $1$, $5$, and $10$ to the original method in
\cite{Laco13}, which uses both P-Cyclic and uniformly random
singleton updates.
All methods achieve similar primal progress per epoch and time.
The adaptive algorithm is slightly slower due to dual-variable maintenance and extra computations for smoothness estimation.
Updates with $5$ or $10$ blocks outperform singleton updates in
iteration count.

In conclusion, Algorithm~\ref{a:abcg} is a competitive alternative to the original BCFW method for structural SVM training.
It does not require knowledge of problem parameters like the smoothness constant, supports parallel updates, and can be applied to objectives where exact line search is infeasible or too costly.

\begin{figure}

\pgfplotslegendfromname{legend_bcfwstruct}
\begin{center}
\subfloat{
  \centering
  \begin{tikzpicture}[scale=1.0]
    \begin{axis}[height=4.2cm,width=4.1cm, legend cell align={left},
      ylabel style={yshift=-0.75em,font=\scriptsize}, 
      xlabel style={yshift=0.50em,font=\scriptsize},
      tick label style={font=\tiny},
      scaled x ticks=false,
      xlabel=Iterations, 
      ylabel=SVM-Primal,
      xtick={0, 40000, 80000},
      grid,
      xmin=0, xmax=100000, ymin=0.15, ymax=1, ymode=log, mark repeat=50]
      \addplot+[thick, mark=square, color=cb-blue] table[x={iter}, y={primal}] {progress_1_150_Adaptive_perm_20.txt};
      \addplot+[thick, mark=10-pointed star, color=cb-clay] table[x={iter}, y={primal}] {progress_5_150_Adaptive_perm_20.txt};
      \addplot+[thick, mark=oplus, color=cb-rose] table[x={iter}, y={primal}] {progress_10_150_Adaptive_perm_20.txt};
      \addplot+[thick, mark=triangle, mark repeat=15, color=cb-burgundy] table[x={iter}, y={primal}] {progress_1_150_Optimal_perm_20.txt};
      \addplot+[thick, mark=pentagon, color=cb-green-sea] table[x={iter}, y={primal}] {progress_1_150_Optimal_uniform_20.txt};
    \end{axis}
  \end{tikzpicture}
}
\subfloat{
  \centering
  \begin{tikzpicture}[scale=1.0]
    \begin{axis}[height=4.2cm,width=4.1cm, legend cell align={left},
      ylabel style={yshift=-0.75em,font=\scriptsize}, 
      xlabel style={yshift=0.50em,font=\scriptsize},
      tick label style={font=\tiny},
      scaled x ticks=false,
      xlabel=Epochs,
      grid,
      xmin=0, xmax=160, ymin=0.15, ymax=1, ymode=log, mark repeat=50]
      \addplot+[thick, mark=square, color=cb-blue] table[x={eff_pass}, y={primal}] {progress_1_150_Adaptive_perm_20.txt};
      \addplot+[thick, mark=10-pointed star, color=cb-clay] table[x={eff_pass}, y={primal}] {progress_5_150_Adaptive_perm_20.txt};
      \addplot+[thick, mark=oplus, color=cb-rose] table[x={eff_pass}, y={primal}] {progress_10_150_Adaptive_perm_20.txt};
      \addplot+[thick, mark=triangle, mark repeat=15, color=cb-burgundy] table[x={eff_pass}, y={primal}] {progress_1_150_Optimal_perm_20.txt};
      \addplot+[thick, mark=pentagon, color=cb-green-sea] table[x={eff_pass}, y={primal}] {progress_1_150_Optimal_uniform_20.txt};
    \end{axis}
  \end{tikzpicture}
}
\subfloat{
  \centering
  \begin{tikzpicture}[scale=1.0]
    \begin{axis}[height=4.2cm,width=4.1cm, legend cell align={left},
      ylabel style={yshift=-0.75em,font=\scriptsize}, 
      xlabel style={yshift=0.50em,font=\scriptsize},
      legend entries={1 Adaptive P-Cyclic, 5 Adaptive P-Cyclic, 10 Adaptive P-Cyclic, 1 Linesearch P-Cyclic, 1 Linesearch Uniform},
      legend style={font=\tiny},
      tick label style={font=\tiny}, 
      legend style={font=\tiny, at={(1.0,2.0)}, anchor=north east, legend columns=3},
      legend to name={legend_bcfwstruct},
      scaled x ticks=false,
      xlabel=Time (sec),
      grid,
      xmin=0, xmax=35, ymin=0.15, ymax=1, ymode=log, mark repeat=50]

      % actual plots
      \addplot+[thick, mark=square, color=cb-blue] table[x={time}, y={primal}] {progress_1_150_Adaptive_perm_20.txt};
      \addplot+[thick, mark=10-pointed star, color=cb-clay] table[x={time}, y={primal}] {progress_5_150_Adaptive_perm_20.txt};
      \addplot+[thick, mark=oplus, color=cb-rose] table[x={time}, y={primal}] {progress_10_150_Adaptive_perm_20.txt};
      \addplot+[thick, mark=triangle, mark repeat=15, color=cb-burgundy] table[x={time}, y={primal}] {progress_1_150_Optimal_perm_20.txt};
      \addplot+[thick, mark=pentagon, color=cb-green-sea] table[x={time}, y={primal}] {progress_1_150_Optimal_uniform_20.txt};
    \end{axis}
  \end{tikzpicture}
}
\end{center}
\caption{Results of the structural SVM training experiment (Section~\ref{sec:ex3}) displaying the primal progress versus iterations, epochs and time.
comparing Algorithm~\ref{a:abcg} using P-Cyclic updates of size
$1$, $5$, and $10$, and the original method in \cite{Laco13} using
an exact line search with P-Cyclic and uniformly random singleton updates.}
  \label{fig:ex3}
\end{figure}

\section{Future work}

Here we conclude by outlining potential directions for future work.
This article focuses on the short-step family of step sizes, and
there is not a clear avenue for generalizing to
other step size families, e.g., open-loop or line search
\cite[Algorithm~3]{Laco13}. Such a task may be more tractable on
separable objective functions (note neither objective in 
Examples~\ref{ex:nonconv},\ref{ex:nonconv2} are
separable). The present analysis also does not generalize to the
case of nonsmooth or partially nonsmooth functions, which can be
common in applications.
\begin{acknowledgements}
This research was partially supported by the DFG Cluster of
Excellence MATH+ (EXC-2046/1, project id 390685689) funded by the
Deutsche Forschungsgemeinschaft (DFG) and took place on the Research
Campus MODAL funded by the German Federal Ministry of Education and
Research (BMBF) (fund numbers 05M14ZAM, 05M20ZBM). The work of ZW
is also supported by NSF DMS-2532423. Part of this
work was conducted while SP was visiting Tokyo University via a
JSPS International Research Fellowship.

We thank our referees, as well as D. Hendrych, G. Iommazzo,
D. Kuzinowicz, M. Turner, and E. Wirth for valuable
feedback on early drafts of this article.\newline 
\noindent
\textbf{Data Availability Statement}\;\;
 The code used to produce these results can
be found at \url{https://github.com/zevwoodstock/BlockFW} and
\url{https://github.com/JannisHal/BCFWstruct}. New
\texttt{LazyUpdate} and
\texttt{adaptive\_block\_coordinate\_frank\_wolfe} methods are also
in\newline
\url{https://github.com/ZIB-IOL/FrankWolfe.jl}.
\end{acknowledgements}
{\footnotesize
\bibliography{\bibfilename}
}
\end{document}